\numberwithin{equation}{section}
\newtheorem{theorem}{Theorem}[section]
\newtheorem{lemma}[theorem]{Lemma}
\newtheorem{definition}[theorem]{Definition}
\title{Theory of Nonlinear Caputo-Katugampola Fractional Differential Equations}
\author{
  Saleh S. Redhwan \thanks{ S.S. Redhwan, saleh.redhwan909@gmail.com.} \\
 Department of Mathematics\\
 Dr.Babasaheb Ambedkar Marathwada\\
University, Aurangabad, (M.S),431001, India\\
  \texttt{saleh.redhwan909@gmail.com} \\
   \And
 Sadikali L. Shaikh\\
  Department of Mathematics\\
 Maulana Azad College of arts, \\
 Science and Commerce \\
 Aurangabad, (M.S),431001, India\\
  \texttt{} \\
   \AND
  Mohammed S. Abdo$^1$$^,$$^2$\\
  $^1$Department of Mathematics\\
 Dr.Babasaheb Ambedkar Marathwada  University\\
Aurangabad, (M.S),431001, India\\
 $^2$Hodeidah University\\
 Al-Hodeidah, 3114, Yemen\\
  \texttt{msabdo1977@gmail.com} \\
}
\begin{document}
\maketitle

\begin{abstract}
This manuscript investigates the existence and uniqueness of solutions to
the first order fractional anti-periodic boundary value problem involving
Caputo-Katugampola (CK) derivative. A variety of tools for analysis this
paper through the integral equivalent equation of the given problem, fixed
point theorems of Leray--Schauder, Krasnoselskii's, and Banach are used.
Examples of the obtained results are also presented.
\end{abstract}

\keywords{Katugampola fractional operator \and fractional differential equations \and fixed
point theorems}

\section{Introduction}

In this manuscript, we investigate the sufficient conditions of the
existence and uniqueness of solutions for the CK fractional differential
equation%
\begin{equation}
^{c}D_{a^{+}}^{\alpha ;\rho }y(t)=f(t,y(t)),\text{ where }0<\alpha <1,\text{
}t\in J=[a,b].  \label{1}
\end{equation}%
with the anti-period boundary condition
\begin{equation}
y(a)+y(b)=0,  \label{2}
\end{equation}%
where $^{c}D_{a^{+}}^{\alpha ;\rho }$ is CK fractional derivatives of order $%
\alpha $, and $f:J\times
\mathbb{R}
\longrightarrow
\mathbb{R}
$ is the appropriate function.

The subject of fractional differential equations has recently evolved as an
interesting and popular field of research; see the interesting books \cite{k}
\cite{p}. In fact, fractional derivatives provide an excellent tool for the
description of memory and hereditary properties of various materials and
processes. More and more researchers have found that fractional differential
equations play important roles in many research areas, such as physics,
chemical technology, population dynamics, biotechnology, and economics. For
examples and recent developments of the topic, we refer to the papers \cite%
{AZ} \cite{BK} \cite{BO} and the references cited therein.

In \cite{K1} the author introduced a new fractional integral, which
generalizes the Riemann-Liouville and the Hadamard integral into a single
form. For further properties such as expansion formulas, variational
calculus applications, control theoretical applications, convexity and
integral inequalities and Hermite-Hadamard type inequalities of this new
operator and similar operators, for example, see \cite{AG} \cite{SG} \cite%
{RH} \cite{GJ}. The corresponding fractional derivatives were introduced in
\cite{K2} \cite{K3} \cite{MO}, Which so-called katugampola fractional
operators. The existence and uniqueness results for the CK derivative are
given in \cite{K4}, the author used the Peano theorem to obtain the
existence and uniqueness results of the Cauchy type problem%
\begin{equation}
^{c}D_{0^{+}}^{\alpha ;\rho }y(t)=f(t,y(t)),\text{ \ \ }t\in \lbrack 0,b],
\label{3}
\end{equation}%
\begin{equation}
y^{(k)}(0)=y_{0}^{(k)},\text{ \ }k=0,1,...,m-1,  \label{4}
\end{equation}%
where $m=[\alpha ].$ In the same context, R. Almeida in \cite{RA}, proved
the uniqueness result of the problem (\ref{3})-(\ref{4}) involving $%
^{c}D_{a^{+}}^{\alpha ;\rho }$ via Gronwall inequality type.

On the other hand, Oliveira and de Oliveira in \cite{AM}, considered the
initial value problem for a nonlinear fractional differential equation
including Hilfer-Katugampola derivative of the form%
\begin{equation}
^{\rho }D_{a^{+}}^{\alpha ,\beta }y(t)=f(t,y(t)),\text{ \ \ }t\in J,
\label{eq17}
\end{equation}%
\begin{equation}
^{\rho }I_{a^{+}}^{1-\gamma }y(a)=c,\text{ \ }\gamma =\alpha +\beta -\alpha
\beta .  \label{eq18}
\end{equation}%
They used the generalized Banach fixed point theorem to investigate the
existence and uniqueness results on problems (\ref{eq17})-(\ref{eq18}).

The recent development of fractional differential equations and the theoretical analysis can be seen in \cite{AP1,AP2,AP3,AP4,BB,K4}. As such, the CK type fractional operators have not been studied and
investigated in much detail as yet as compared to the other classical
operators. In this manuscript, we introduce new results of the solution to
the problem (\ref{1})-(\ref{2}) involving CK fractional operator. Further,
we use some fixed point theorems to analyize our results. To the best of our
knowledge, the fractional boundary value problems involving
Caputo-Katugampola type fractional operators have not yet been investigated
and developed till the present day.

The paper is organized as follows. In the next section, we begin by
summarizing the forms of Katugampola fractional integral and CK type
fractional derivative, and we also present the background material related
to our work and prove an important lemma which plays a key role in the
sequel. The third section contains the existence and uniqueness results to
the problem (\ref{1})-(\ref{2}) by means of fixed point theorems of (Banach,
Leray-schauder, Krasnoselskii's).\ In the last section, we present some
illustrative examples.

\section{Preliminaries}

In this partition, we recall some essential basic definitions, lemmas, and
preliminary facts related to our results throughout the paper. Let $J=[a,b]$
$\left( -\infty <a<b<\infty \right) \ $be a finite interval of $%
\mathbb{R}
$. Denote $\mathcal{C(}J,%
\mathbb{R}
)$ be the Banach space of all continuous functions $h:J\longrightarrow
\mathbb{R}
$ endowed with the norm given by%
\begin{equation*}
\left\Vert h\right\Vert _{\mathcal{C}}=\sup_{t\in J}\left\vert
h(t)\right\vert :t\in J\},
\end{equation*}%
where $h\in \mathcal{C(}J,%
\mathbb{R}
)$. $\mathcal{C}^{n}\mathcal{(}J,%
\mathbb{R}
)$ $(n\in
\mathbb{N}
_{0})$ denotes the set of mappings having $n$ times continuously
differentiable on $J.$

For $a<b$, $c\in
\mathbb{R}
$ and $1\leq p<\infty ,$ define the function space%
\begin{equation*}
X_{c}^{p}(a,b)=\left\{ h:J\rightarrow
\mathbb{R}
:\left\Vert h\right\Vert _{X_{c}^{p}}=\left( \int_{a}^{b}\left\vert
t^{c}h(t)\right\vert ^{p}\frac{dt}{t}\right) ^{\frac{1}{p}}<\infty \right\} .
\end{equation*}%
for $p=\infty ,$
\begin{equation*}
\left\Vert h\right\Vert _{X_{c}^{p}}=ess\sup_{a\leq t\leq b}\left[
\left\vert t^{c}h(t)\right\vert \right] .
\end{equation*}

Now we state some definitions of the generalized fractional operators were
introduced in \cite{K2,K4,K1}.

\begin{definition}
\label{de1} Let $t>a$ be two reals, $\alpha >0$, $\rho >0,$ and $%
h:J\rightarrow
\mathbb{R}
$ be an integrable function. The left-sided Katugampola fractional integral
of order $\alpha $ and type $\rho $ is defined by
\begin{equation}
I_{a^{+}}^{\alpha ;\rho }h(t)=\frac{1}{\Gamma (\alpha )}\int_{a}^{t}s^{\rho
-1}\left( \frac{t^{\rho }-s^{\rho }}{\rho }\right) ^{\alpha -1}h(s)ds,
\label{a1}
\end{equation}%
where, $\Gamma (.)$ is a gamma function.
\end{definition}

\begin{definition}
Let $n-1<\alpha <n$ , $(n=[\alpha ]+1),$ $\rho >0,$ and $h\in \mathcal{C}^{n}%
\mathcal{(}J,%
\mathbb{R}
).$ The left-sided Katugampola fractional derivative of order $\alpha $ with
dependence on a parameter $\rho $ is defined as%
\begin{equation}
D_{a^{+}}^{\alpha ;\rho }h(t)=\left( t^{1-\rho }\frac{d}{dt}\right)
^{n}I_{a^{+}}^{n-\alpha ;\rho }h(t)=\frac{\gamma ^{n}}{\Gamma (1-\alpha )}%
\int_{a}^{t}\left( \frac{t^{\rho }-s^{\rho }}{\rho }\right) ^{n-\alpha -1}%
\frac{h(s)}{s^{1-\rho }}ds\text{, }t>a,  \label{e2}
\end{equation}%
where $\gamma =\left( t^{1-\rho }\frac{d}{dt}\right) .$ In particular, if $%
0<\alpha <1,$ $\rho >0,$ and $h\in C^{1}(J,%
\mathbb{R}
),$ we have
\begin{equation*}
D_{a^{+}}^{\alpha ;\rho }h(t)=\left( t^{1-\rho }\frac{d}{dt}\right)
I_{a^{+}}^{1-\alpha ;\rho }h(t)=\frac{\gamma }{\Gamma (1-\alpha )}%
\int_{a}^{t}\left( \frac{t^{\rho }-s^{\rho }}{\rho }\right) ^{-\alpha }\frac{%
h(s)}{s^{1-\rho }}ds\text{, }t>a.
\end{equation*}
\end{definition}

\begin{definition}
Let $\alpha \geq 0,$ $n=[\alpha ]+1.$ If $h\in \mathcal{C}^{n}\mathcal{(}J,%
\mathbb{R}
),$ we define the left sided CK fractional derivatives of $h$ of order $%
\alpha $ with a parameter $\rho >0$ by%
\begin{equation}
^{C}D_{a^{+}}^{\alpha ;\rho }h(t)=D_{a^{+}}^{\alpha ;\rho }\left[
h(t)-\sum_{k=0}^{n-1}\frac{h_{\rho }^{(k)}(a)}{k!}\left( \frac{t^{\rho
}-a^{\rho }}{\rho }\right) ^{k}\right] ,  \label{e3}
\end{equation}%
where $h_{\rho }^{(k)}(t)=\left( t^{1-\rho }\frac{d}{dt}\right) ^{k}h(t).$
In case $0<\alpha <1$, and $h\in \mathcal{C}^{1}\mathcal{(}J,%
\mathbb{R}
)$ we have
\begin{equation*}
^{C}D_{a^{+}}^{\alpha ;\rho }h(t)=D_{a^{+}}^{\alpha ;\rho }\left[ h(t)-h(a)%
\right] .
\end{equation*}
From last equation and (\ref{e2}), one deduces
\begin{equation*}
^{C}D_{a^{+}}^{\alpha ;\rho }h(t)=\frac{\gamma }{\Gamma (1-\alpha )}%
\int_{a}^{t}\left( \frac{t^{\rho }-s^{\rho }}{\rho }\right) ^{-\alpha }\frac{%
\left[ h(s)-h(a)\right] }{s^{1-\rho }}ds\text{, }t>a.
\end{equation*}%
Notice that, if $\alpha \notin
\mathbb{N}
_{0}$, $h$ is an absolutely continuous function on $J,$ then the CK
fractional derivative exists a.e. Moreover, we have%
\begin{eqnarray*}
^{C}D_{a^{+}}^{\alpha ;\rho }h(t) &=&\frac{1}{\Gamma (1-\alpha )}%
\int_{a}^{t}\left( \frac{t^{\rho }-s^{\rho }}{\rho }\right) ^{-\alpha }\frac{%
h_{\rho }^{(1)}(s)}{s^{1-\rho }}ds\text{, }t>a, \\
&=&I_{a^{+}}^{1-\alpha ;\rho }h_{\rho }^{(1)}(t).
\end{eqnarray*}%
Also, if $\alpha \in
\mathbb{N}
,$ $^{C}D_{a^{+}}^{\alpha ;\rho }h(t)=h_{\rho }^{(n)}(t).$ Particularly, $%
^{C}D_{a^{+}}^{0;\rho }h(t)=h_{\rho }^{(0)}(t)=h(t).$
\end{definition}

\begin{lemma}
$I_{a^{+}}^{\alpha ;\rho }$ is bounded on the function space $%
X_{c}^{p}(a,b). $
\end{lemma}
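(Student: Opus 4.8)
The plan is to prove directly that $\|I_{a^{+}}^{\alpha;\rho}h\|_{X_{c}^{p}}\le M\,\|h\|_{X_{c}^{p}}$ for a finite constant $M=M(\alpha,\rho,c,a,b)$, handling $1\le p<\infty$ and $p=\infty$ in parallel. The engine is the generalized (integral) Minkowski inequality, combined with the fact that the weight $\frac{dt}{t}$ appearing in $\|\cdot\|_{X_{c}^{p}}$ is invariant under the multiplicative rescaling $t\mapsto\lambda t$. To expose that scaling structure I would first substitute $\sigma=(s/t)^{\rho}$ (equivalently $s=t\sigma^{1/\rho}$) in (\ref{a1}). Writing $h(s)=s^{-c}\phi(s)$ with $\phi(s)=s^{c}h(s)$, a short computation shows that the $s$-powers arising from $s^{\rho-1}$, from $s^{-c}$, and from the Jacobian collapse into the single factor $\sigma^{-c/\rho}$, while all the $t$-dependence collects into a single prefactor $t^{\rho\alpha}$, giving
\begin{equation*}
t^{c}I_{a^{+}}^{\alpha;\rho}h(t)=\frac{t^{\rho\alpha}}{\rho^{\alpha}\Gamma(\alpha)}\int_{a^{\rho}/t^{\rho}}^{1}(1-\sigma)^{\alpha-1}\sigma^{-c/\rho}\,\phi\!\left(t\sigma^{1/\rho}\right)d\sigma,
\end{equation*}
where $\|\phi\|_{L^{p}(dt/t)}=\|h\|_{X_{c}^{p}}$.

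Next I would apply the generalized Minkowski inequality in the variable $t$ (with respect to the positive measure $\frac{dt}{t}$) to move the $L^{p}$ norm under the $\sigma$-integral, encoding the $t$-dependent lower limit as an indicator $\mathbf{1}_{\{t\ge a\sigma^{-1/\rho}\}}$. This bounds $\|I_{a^{+}}^{\alpha;\rho}h\|_{X_{c}^{p}}$ by
\begin{equation*}
\frac{1}{\rho^{\alpha}\Gamma(\alpha)}\int_{0}^{1}(1-\sigma)^{\alpha-1}\sigma^{-c/\rho}\Big\|\,t^{\rho\alpha}\,\mathbf{1}_{\{t\ge a\sigma^{-1/\rho}\}}\,\phi\!\left(t\sigma^{1/\rho}\right)\Big\|_{L^{p}(dt/t)}\,d\sigma .
\end{equation*}
On $J$ one has $t^{\rho\alpha}\le b^{\rho\alpha}$, and the substitution $r=t\sigma^{1/\rho}$ leaves $\frac{dt}{t}=\frac{dr}{r}$ unchanged while mapping the admissible range into $[a,b]$; hence each inner norm is $\le b^{\rho\alpha}\|\phi\|_{L^{p}(dt/t)}=b^{\rho\alpha}\|h\|_{X_{c}^{p}}$. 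The indicator also forces the integrand to vanish for $\sigma<(a/b)^{\rho}$, so $\sigma^{-c/\rho}\le\max\{1,(b/a)^{c}\}$ there, and the surviving $\sigma$-integral is controlled by $\int_{0}^{1}(1-\sigma)^{\alpha-1}d\sigma=1/\alpha$. Collecting constants gives the claim with $M=\dfrac{b^{\rho\alpha}\max\{1,(b/a)^{c}\}}{\rho^{\alpha}\Gamma(\alpha+1)}$. The case $p=\infty$ follows from the same displayed identity by estimating the $\sigma$-integral pointwise in $t$ using $|\phi|\le\|h\|_{X_{c}^{p}}$.

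The step I expect to be most delicate is the bookkeeping around the $t$-dependent lower limit $a^{\rho}/t^{\rho}$: one must keep it as an indicator \emph{inside} the norm rather than fixing global $\sigma$-limits, so that after Minkowski the scaling substitution $r=t\sigma^{1/\rho}$ genuinely lands in $[a,b]$ and $\sigma^{-c/\rho}$ is evaluated only on $[(a/b)^{\rho},1]$, where it is bounded. This is precisely what guarantees convergence with no sign or size restriction on $c$, in contrast to the naive Beta-function estimate $\int_{0}^{1}(1-\sigma)^{\alpha-1}\sigma^{-c/\rho}d\sigma=B(1-c/\rho,\alpha)$, which would require $c<\rho$. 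Verifying the exact cancellation of the $\sigma$- and $s$-powers in the first identity, and the invariance of $\frac{dt}{t}$ under multiplicative scaling, are the routine but essential computations underpinning the argument.
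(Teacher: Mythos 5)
Your proof is correct, but there is nothing in the paper to measure it against: the lemma is stated bare, with no proof and not even hypotheses on $\alpha,\rho,c,p$, evidently imported from Katugampola \cite{K1}. There, the boundedness of $I_{a^{+}}^{\alpha;\rho}$ on $X_{c}^{p}(a,b)$ is established by essentially the device you chose — a dilation substitution reducing the kernel to a function of $s/t$ alone, followed by the generalized Minkowski inequality with respect to the scaling-invariant measure $\frac{dt}{t}$ — so your route matches the standard one in spirit while being fully self-contained here. I checked the bookkeeping: under $\sigma=(s/t)^{\rho}$ one has $s^{\rho-1}\,ds=\frac{t^{\rho}}{\rho}\,d\sigma$ and $\left(\frac{t^{\rho}-s^{\rho}}{\rho}\right)^{\alpha-1}=\frac{t^{\rho(\alpha-1)}}{\rho^{\alpha-1}}(1-\sigma)^{\alpha-1}$, which yields exactly your displayed identity for $t^{c}I_{a^{+}}^{\alpha;\rho}h(t)$; the substitution $r=t\sigma^{1/\rho}$ then maps the admissible range $[a\sigma^{-1/\rho},b]$ into $[a,b]$ with $\frac{dr}{r}=\frac{dt}{t}$, so the inner norm is indeed $\le b^{\rho\alpha}\|h\|_{X_{c}^{p}}$, and the indicator confines $\sigma$ to $[(a/b)^{\rho},1]$, where $\sigma^{-c/\rho}\le\max\{1,(b/a)^{c}\}$. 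What your version buys over a citation is an explicit constant $M=\frac{b^{\rho\alpha}\max\{1,(b/a)^{c}\}}{\rho^{\alpha}\Gamma(\alpha+1)}$, a uniform treatment of $p=\infty$, and — via the indicator trick you correctly identify as the delicate point — the removal of any relation between $c$ and $\rho$ on a finite interval, in contrast to the hypothesis $\rho\ge c$ that the paper carries in Lemma \ref{L2} and that a naive Beta-function bound would force. Two small repairs: first, your argument (and indeed the definition of $X_{c}^{p}(a,b)$ with weight $\frac{dt}{t}$, and the finiteness of $(b/a)^{c}$) tacitly requires $0<a<b<\infty$ and $\rho>0$, whereas the paper sloppily allows $-\infty<a$; state these as hypotheses. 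Second, in your last sentence $|\phi|\le\|h\|_{X_{c}^{p}}$ should read $|\phi|\le\|h\|_{X_{c}^{\infty}}$ almost everywhere — a typo, not a gap.
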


\begin{lemma}
\label{L2} Let $\alpha >0,$ $\beta >0,$ $h$ $\in X_{c}^{p}(a,b)$ $(1\leq
p<\infty ),$ $\rho ,c\in
\mathbb{R}
,$ $\rho \geq c.$ Then we have%
\begin{equation*}
I_{a^{+}}^{\alpha ;\rho }I_{a^{+}}^{\beta ;\rho }h(t)=I_{a^{+}}^{\alpha
+\beta ;\rho }h(t),\qquad ^{c}D_{a^{+}}^{\alpha ;\rho }\text{ }^{\rho
}I_{a^{+}}^{\alpha }h(t)=h(t).
\end{equation*}
\end{lemma}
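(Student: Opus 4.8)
The statement contains two identities, and I would treat them separately, establishing the semigroup (index) law first since the second identity will lean on it.

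For the first identity, the plan is to insert definition (\ref{a1}) twice and interchange the order of integration. Writing out
\[
I_{a^{+}}^{\alpha ;\rho }I_{a^{+}}^{\beta ;\rho }h(t)=\frac{1}{\Gamma (\alpha )\Gamma (\beta )}\int_{a}^{t}\int_{a}^{s}s^{\rho -1}u^{\rho -1}\left( \frac{t^{\rho }-s^{\rho }}{\rho }\right) ^{\alpha -1}\left( \frac{s^{\rho }-u^{\rho }}{\rho }\right) ^{\beta -1}h(u)\,du\,ds,
\]
I would apply Fubini's theorem on the triangular region $a\le u\le s\le t$ to pull the $u$-integral outside, leaving the inner integral
\[
\int_{u}^{t}s^{\rho -1}\left( \frac{t^{\rho }-s^{\rho }}{\rho }\right) ^{\alpha -1}\left( \frac{s^{\rho }-u^{\rho }}{\rho }\right) ^{\beta -1}ds.
\]
The key computation is the change of variable $\tau =(s^{\rho }-u^{\rho })/(t^{\rho }-u^{\rho })$, which maps $s\in[u,t]$ onto $\tau\in[0,1]$, satisfies $s^{\rho -1}ds=\frac{t^{\rho }-u^{\rho }}{\rho }\,d\tau$, and turns the inner integral into $\left( \frac{t^{\rho }-u^{\rho }}{\rho }\right) ^{\alpha +\beta -1}\int_{0}^{1}\tau ^{\beta -1}(1-\tau )^{\alpha -1}d\tau$. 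Recognizing the last factor as the Beta integral $B(\beta ,\alpha )=\Gamma (\alpha )\Gamma (\beta )/\Gamma (\alpha +\beta )$ collapses the constants to $1/\Gamma(\alpha+\beta)$ and reproduces exactly $I_{a^{+}}^{\alpha +\beta ;\rho }h(t)$.

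For the second identity I would first reduce the Caputo operator to the Riemann--Liouville one. Since $0<\alpha<1$ and $h\in X_{c}^{p}$, the integral $I_{a^{+}}^{\alpha ;\rho }h$ vanishes at $t=a$, so the subtraction appearing in definition (\ref{e3}) is inert and $^{c}D_{a^{+}}^{\alpha ;\rho }I_{a^{+}}^{\alpha ;\rho }h=D_{a^{+}}^{\alpha ;\rho }I_{a^{+}}^{\alpha ;\rho }h$. Using the representation $D_{a^{+}}^{\alpha ;\rho }=\gamma I_{a^{+}}^{1-\alpha ;\rho }$ read off from (\ref{e2}) together with the index law just proved (with exponents $1-\alpha$ and $\alpha$), I obtain
\[
D_{a^{+}}^{\alpha ;\rho }I_{a^{+}}^{\alpha ;\rho }h=\gamma I_{a^{+}}^{1-\alpha ;\rho }I_{a^{+}}^{\alpha ;\rho }h=\gamma I_{a^{+}}^{1;\rho }h.
\]
Finally $\gamma I_{a^{+}}^{1;\rho }h(t)=t^{1-\rho }\frac{d}{dt}\int_{a}^{t}s^{\rho -1}h(s)\,ds=t^{1-\rho }t^{\rho -1}h(t)=h(t)$ by the fundamental theorem of calculus, which closes the argument.

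The main obstacle is rigour rather than algebra: justifying the interchange of integrals in the first part and the differentiation under the integral sign in the second. The hypothesis $h\in X_{c}^{p}(a,b)$, together with the boundedness of $I_{a^{+}}^{\alpha ;\rho }$ established in the preceding lemma, guarantees the absolute integrability needed for Fubini, while the inner Beta integral converges precisely because $\alpha,\beta>0$. An alternative I would keep in reserve is the substitution $x=t^{\rho }/\rho$, $\sigma =s^{\rho }/\rho$, under which $\gamma=d/dx$ and $I_{a^{+}}^{\alpha ;\rho }$ becomes the classical Riemann--Liouville integral on $[a^{\rho }/\rho,\,x]$; both identities would then follow directly from the corresponding classical facts, providing a clean cross-check of the computation above.
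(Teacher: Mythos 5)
Your argument is correct, but there is nothing in the paper to compare it against: Lemma \ref{L2} is stated as a preliminary without proof, imported from the cited sources \cite{K1,K2,K4}. Your route --- Fubini on the triangle $a\leq u\leq s\leq t$, the substitution $\tau =(s^{\rho }-u^{\rho })/(t^{\rho }-u^{\rho })$ yielding the Beta integral $B(\beta ,\alpha )=\Gamma (\alpha )\Gamma (\beta )/\Gamma (\alpha +\beta )$, and then $^{c}D_{a^{+}}^{\alpha ;\rho }I_{a^{+}}^{\alpha ;\rho }=\gamma I_{a^{+}}^{1-\alpha ;\rho }I_{a^{+}}^{\alpha ;\rho }=\gamma I_{a^{+}}^{1;\rho }$ followed by the fundamental theorem of calculus --- is precisely the standard proof in those references, and your reserve substitution $x=t^{\rho }/\rho$, conjugating the Katugampola operators to Riemann--Liouville ones, is the other standard device there. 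So the proposal is a sound, self-contained verification of a result the paper merely quotes.

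Two caveats are worth making explicit. First, you prove the second identity only for $0<\alpha <1$, whereas the lemma asserts $\alpha >0$; for $n-1<\alpha <n$ you would write $D_{a^{+}}^{\alpha ;\rho }=\gamma ^{n}I_{a^{+}}^{n-\alpha ;\rho }$ and check that every coefficient $\gamma ^{k}I_{a^{+}}^{\alpha ;\rho }h$ ($k=0,\dots ,n-1$) subtracted in definition (\ref{e3}) vanishes at $t=a$, which follows from your first identity since $\gamma ^{k}I_{a^{+}}^{\alpha ;\rho }h=I_{a^{+}}^{\alpha -k;\rho }h$; as the paper only ever uses $0<\alpha <1$, this restriction is harmless in context. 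Second, your claim that $I_{a^{+}}^{\alpha ;\rho }h$ vanishes at $t=a$ for mere $h\in X_{c}^{p}$ is not automatic: a function behaving like $\bigl( (s^{\rho }-a^{\rho })/\rho \bigr) ^{\alpha -1}$ near $a$ lies in $X_{c}^{1}$ yet has $I_{a^{+}}^{\alpha ;\rho }h(t)\sim \bigl( (t^{\rho }-a^{\rho })/\rho \bigr) ^{2\alpha -1}$, which blows up when $\alpha <1/2$. One needs a H\"{o}lder estimate (roughly $\alpha >1/p$) or, as in the only case the paper actually invokes, continuity of $h$, under which the vanishing and the pointwise use of the fundamental theorem of calculus are immediate. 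These are rigor footnotes about a statement that is itself loosely framed on $X_{c}^{p}$, not gaps in your idea.
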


\begin{lemma}
\label{L5} Let $t>a,$ $\alpha ,\delta \in (0,\infty ),$ and $%
I_{a^{+}}^{\alpha ;\rho },D_{a^{+}}^{\alpha ;\rho }$ and $%
^{C}D_{a^{+}}^{\alpha ;\rho }$are according on (\ref{a1}), (\ref{e2}) and (%
\ref{e3}) respectively. Then we have%
\begin{equation*}
I_{a^{+}}^{\alpha ;\rho }\left( \frac{t^{\rho }-a^{\rho }}{\rho }\right)
^{\delta -1}=\frac{\Gamma (\delta )}{\Gamma (\delta +\alpha )}\left( \frac{%
t^{\rho }-a^{\rho }}{\rho }\right) ^{\alpha +\delta -1},
\end{equation*}%
\begin{equation*}
^{C}D_{a^{+}}^{\alpha ;\rho }\left( \frac{t^{\rho }-a^{\rho }}{\rho }\right)
^{\delta -1}=\frac{\Gamma (\delta )}{\Gamma (\delta -\alpha )}\left( \frac{%
t^{\rho }-a^{\rho }}{\rho }\right) ^{\delta -\alpha -1},
\end{equation*}%
and%
\begin{equation*}
^{C}D_{a^{+}}^{\alpha ;\rho }\left( \frac{t^{\rho }-a^{\rho }}{\rho }\right)
^{k}=0,\text{ \ }\alpha \geq 0,\text{ }k=0,1,...,n-1.
\end{equation*}%
Particularly, $^{C}D_{a^{+}}^{\alpha ;\rho }\left( 1\right) =0.$
\end{lemma}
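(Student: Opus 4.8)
The plan is to prove the first (integral) identity directly from Definition \ref{de1} and then deduce the two derivative identities from it. Throughout it is convenient to abbreviate $\tau=\frac{t^{\rho}-a^{\rho}}{\rho}$ and to record the single observation that drives every computation: the operator $\gamma=t^{1-\rho}\frac{d}{dt}$ acts on powers of $\tau$ exactly as the ordinary derivative in the variable $\tau$, that is, $\gamma\,\tau^{m}=m\,\tau^{m-1}$ (since $\frac{d\tau}{dt}=t^{\rho-1}$, the factors $t^{1-\rho}$ and $t^{\rho-1}$ cancel).

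First I would establish the integral formula. Inserting $h(s)=\left(\frac{s^{\rho}-a^{\rho}}{\rho}\right)^{\delta-1}$ into (\ref{a1}) gives
\begin{equation*}
I_{a^{+}}^{\alpha;\rho}\tau^{\delta-1}=\frac{1}{\Gamma(\alpha)}\int_{a}^{t}s^{\rho-1}\left(\frac{t^{\rho}-s^{\rho}}{\rho}\right)^{\alpha-1}\left(\frac{s^{\rho}-a^{\rho}}{\rho}\right)^{\delta-1}\,ds .
\end{equation*}
The key step is the change of variable $u=\frac{s^{\rho}-a^{\rho}}{t^{\rho}-a^{\rho}}$, under which $s=a\mapsto u=0$, $s=t\mapsto u=1$, and $s^{\rho-1}\,ds=\tau\,du$, while $\frac{s^{\rho}-a^{\rho}}{\rho}=\tau u$ and $\frac{t^{\rho}-s^{\rho}}{\rho}=\tau(1-u)$. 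Collecting the powers of $\tau$ (the exponents add to $\alpha+\delta-1$) reduces the integral to $\tau^{\alpha+\delta-1}$ times the Euler Beta integral $\int_{0}^{1}u^{\delta-1}(1-u)^{\alpha-1}\,du=\frac{\Gamma(\delta)\Gamma(\alpha)}{\Gamma(\delta+\alpha)}$; the factor $\Gamma(\alpha)$ cancels, leaving exactly the asserted expression.

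Next I would obtain the derivative formula from the integral one. For $0<\alpha<1$ the definition (via (\ref{e3}) and (\ref{e2})) gives $^{C}D_{a^{+}}^{\alpha;\rho}h=I_{a^{+}}^{1-\alpha;\rho}h_{\rho}^{(1)}$; using $\gamma\,\tau^{m}=m\,\tau^{m-1}$ on $h=\tau^{\delta-1}$ yields $h_{\rho}^{(1)}=(\delta-1)\tau^{\delta-2}$. Applying the integral identity just proved, with $\alpha$ replaced by $1-\alpha$ and $\delta$ by $\delta-1$, produces $(\delta-1)\frac{\Gamma(\delta-1)}{\Gamma(\delta-\alpha)}\tau^{\delta-\alpha-1}$, and the recursion $(\delta-1)\Gamma(\delta-1)=\Gamma(\delta)$ collapses this to the stated result. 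For general $\alpha\ge 0$ the same scheme applies after iterating $\gamma$ to form $h_{\rho}^{(n)}=\frac{\Gamma(\delta)}{\Gamma(\delta-n)}\tau^{\delta-n-1}$ with $n=[\alpha]+1$ and invoking the integral identity once with order $n-\alpha$; the telescoping of the Gamma factors again gives $\frac{\Gamma(\delta)}{\Gamma(\delta-\alpha)}\tau^{\delta-\alpha-1}$.

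Finally, the annihilation identity follows from the structure of the Caputo correction in (\ref{e3}). For $h=\tau^{k}$ with $0\le k\le n-1$, repeated use of $\gamma\,\tau^{m}=m\,\tau^{m-1}$ gives $h_{\rho}^{(j)}(t)=\frac{k!}{(k-j)!}\tau^{k-j}$ for $j\le k$ and $0$ otherwise; evaluating at $t=a$ (where $\tau=0$) leaves only the $j=k$ term, equal to $k!$, so that $\frac{h_{\rho}^{(k)}(a)}{k!}\tau^{k}=h(t)$. Hence the bracketed quantity $h-\sum_{j=0}^{n-1}\frac{h_{\rho}^{(j)}(a)}{j!}\tau^{j}$ vanishes identically, and $^{C}D_{a^{+}}^{\alpha;\rho}\tau^{k}=D_{a^{+}}^{\alpha;\rho}(0)=0$; the case $k=0$ is $^{C}D_{a^{+}}^{\alpha;\rho}(1)=0$. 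I expect the substitution-to-Beta step in the first part to be the genuine workhorse, and the only delicate point to be bookkeeping on the admissible parameter ranges (and the vanishing of the boundary terms) so that the representation $^{C}D_{a^{+}}^{\alpha;\rho}h=I_{a^{+}}^{n-\alpha;\rho}h_{\rho}^{(n)}$ is legitimately applicable to these power functions.
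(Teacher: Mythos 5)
Your proof is correct, and in fact the paper offers no proof to compare it against: Lemma \ref{L5} is stated in the Preliminaries as background material quoted from the cited literature on Katugampola operators, so your blind derivation supplies what the paper omits. What you give is the standard argument, and all three steps check out: the substitution $u=\frac{s^{\rho}-a^{\rho}}{t^{\rho}-a^{\rho}}$ correctly converts the kernel integral into the Euler Beta integral (the exponent bookkeeping $1+(\alpha-1)+(\delta-1)=\alpha+\delta-1$ is right); the reduction $^{C}D_{a^{+}}^{\alpha;\rho}h=I_{a^{+}}^{n-\alpha;\rho}h_{\rho}^{(n)}$ combined with $\gamma\,\tau^{m}=m\,\tau^{m-1}$ (where $\tau=\frac{t^{\rho}-a^{\rho}}{\rho}$) gives the second identity by telescoping Gamma factors; and your proof of the annihilation identity via the Caputo correction term in (\ref{e3}) is cleaner than invoking the derivative formula, since it shows the bracketed function vanishes identically.

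The one point you flag but should make explicit is the admissible range of $\delta$ in the second identity, because it is not a mere bookkeeping nicety: as stated with $\delta\in(0,\infty)$ the lemma is internally inconsistent. Taking $\delta=k+1$ with $k\in\{0,1,\dots,n-1\}$, the second identity would give $^{C}D_{a^{+}}^{\alpha;\rho}\tau^{k}=\frac{\Gamma(k+1)}{\Gamma(k+1-\alpha)}\tau^{k-\alpha}\neq 0$ (for $0<\alpha<1$, $k=0$, the Gamma value $\Gamma(1-\alpha)$ is finite), contradicting the third identity. Your own derivation silently imposes the correct restriction: applying the first identity to $h_{\rho}^{(n)}=\frac{\Gamma(\delta)}{\Gamma(\delta-n)}\tau^{\delta-n-1}$ requires $\delta-n>0$ for the Beta integral $\int_{0}^{1}u^{\delta-n-1}(1-u)^{n-\alpha-1}\,du$ to converge (for $0<\alpha<1$, this is $\delta>1$). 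So the proof is sound precisely on the range $\delta>n$ where the formula is true, and the restriction should be stated rather than left as an expectation; with that proviso the argument is complete.
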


\begin{theorem}
\cite{BK} (Banach fixed point theorem) Let $(X;d)$ be a nonempty complete
metric space with $T:X\rightarrow X$ is a contraction mapping. Then map $T$
has a fixed point.
\end{theorem}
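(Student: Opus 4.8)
The plan is to construct a fixed point explicitly via Picard iteration and then invoke completeness to secure its existence. Since $T$ is a contraction, there is a constant $k\in[0,1)$ with $d(Tx,Ty)\le k\,d(x,y)$ for all $x,y\in X$. First I would fix an arbitrary seed point $x_{0}\in X$ (the space is nonempty) and define the orbit $x_{n+1}=Tx_{n}$ for $n\ge 0$. The natural first step is to control consecutive distances: a short induction on $n$ gives $d(x_{n+1},x_{n})\le k^{n}\,d(x_{1},x_{0})$, since each application of $T$ shrinks the gap by the factor $k$.

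The heart of the argument is to upgrade this to a Cauchy estimate. For $m>n$, I would chain the triangle inequality along the orbit,
\begin{equation*}
d(x_{m},x_{n})\le\sum_{i=n}^{m-1}d(x_{i+1},x_{i})\le d(x_{1},x_{0})\sum_{i=n}^{m-1}k^{i}\le \frac{k^{n}}{1-k}\,d(x_{1},x_{0}),
\end{equation*}
where the last bound comes from summing the geometric tail. Because $0\le k<1$, the factor $k^{n}\to 0$ as $n\to\infty$, so the right-hand side can be made arbitrarily small; hence $(x_{n})$ is a Cauchy sequence. This is the step I expect to carry the real weight, as it is precisely where the contraction constant and the geometric series conspire to force convergence.

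Completeness of $(X,d)$ then supplies a limit $x^{*}\in X$ with $x_{n}\to x^{*}$. To finish, I would use that a contraction is Lipschitz, hence continuous, so passing to the limit in the recursion gives
\begin{equation*}
Tx^{*}=T\!\left(\lim_{n\to\infty}x_{n}\right)=\lim_{n\to\infty}Tx_{n}=\lim_{n\to\infty}x_{n+1}=x^{*},
\end{equation*}
which identifies $x^{*}$ as a fixed point. Although the statement asserts only existence, uniqueness is immediate and worth recording: if $Tx^{*}=x^{*}$ and $Ty^{*}=y^{*}$, then $d(x^{*},y^{*})=d(Tx^{*},Ty^{*})\le k\,d(x^{*},y^{*})$, forcing $(1-k)\,d(x^{*},y^{*})\le 0$ and hence $x^{*}=y^{*}$.
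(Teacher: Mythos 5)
Your proof is correct and complete: the paper itself offers no proof of this theorem, stating it only as a cited result from \cite{BK}, so there is no internal argument to compare against. What you have written is the canonical Picard-iteration proof --- geometric decay of consecutive distances, the telescoping triangle-inequality bound $d(x_{m},x_{n})\le \frac{k^{n}}{1-k}d(x_{1},x_{0})$ yielding the Cauchy property, completeness supplying the limit, and continuity of the contraction identifying it as a fixed point --- and your added uniqueness remark is also the standard one and is in fact what the paper implicitly relies on when it invokes this theorem to get a \emph{unique} solution in Theorem 3.3.
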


\begin{theorem}
\cite{BK} (Krasnoselskii's fixed point theorem) Let $X$ be a Banach space,
let $\Omega $ be a bounded closed convex subset of $X$ and let $T_{1},T_{2}$
be mapping from $\Omega $ into $X$ such that $T_{1}x+T_{2}y$ $\in $\ $\Omega
$ for every pair $x,y\in \Omega $. If $T_{1}$ is contraction and $T_{2}$ is
completely continuous, then the equation $T_{1}x+T_{2}x$ $=x$ has a solution
on $\Omega .$
\end{theorem}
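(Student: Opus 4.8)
The plan is to prove this classical Krasnoselskii statement by combining the contraction principle (the preceding Banach theorem) with the Schauder fixed point theorem. The idea is to build an auxiliary self-map of $\Omega$ whose fixed points coincide with solutions of $T_1 x + T_2 x = x$, and then to establish that this auxiliary map is continuous with relatively compact range so that Schauder applies.

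First I would fix $y \in \Omega$ and study the map $x \mapsto T_1 x + T_2 y$. By the hypothesis that $T_1 x + T_2 y \in \Omega$ for all $x, y \in \Omega$, this is a self-map of $\Omega$; since $\Omega$ is closed in the Banach space $X$ it is a complete metric space, and the map inherits the contraction constant $k < 1$ of $T_1$. The Banach fixed point theorem then gives a unique $x \in \Omega$ with $x = T_1 x + T_2 y$, and I define $B : \Omega \to \Omega$ by letting $By$ be this unique point, so that $By = T_1(By) + T_2 y$. A fixed point of $B$ is exactly a solution of $T_1 x + T_2 x = x$, so it suffices to show that $B$ has a fixed point.

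Next I would derive the key estimate. Subtracting the defining relations for $By_1$ and $By_2$ and using the contraction property gives $\|By_1 - By_2\| \le k \|By_1 - By_2\| + \|T_2 y_1 - T_2 y_2\|$, hence $\|By_1 - By_2\| \le (1-k)^{-1}\|T_2 y_1 - T_2 y_2\|$. Because $T_2$ is continuous this shows that $B$ is continuous. For compactness, take any sequence $(y_n) \subseteq \Omega$; since $T_2$ is completely continuous and $\Omega$ is bounded, $(T_2 y_n)$ has a convergent, hence Cauchy, subsequence, and the estimate forces the corresponding subsequence of $(By_n)$ to be Cauchy as well, so it converges in the closed (hence complete) set $\Omega$. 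Thus $B(\Omega)$ is relatively compact.

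With $B$ a continuous self-map of the bounded closed convex set $\Omega$ having relatively compact range, Schauder's fixed point theorem yields $y^* \in \Omega$ with $By^* = y^*$, and therefore $y^* = T_1 y^* + T_2 y^*$, the desired solution. I expect the main obstacle to be the compactness step: the contraction constant must be strictly less than one so that the factor $(1-k)^{-1}$ is finite and the complete continuity of $T_2$ can genuinely be transferred to $B$, and one must be careful that the points produced by the Banach theorem all lie in the closed set $\Omega$ so that the Cauchy subsequences actually converge there.
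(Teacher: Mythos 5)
Your proof is correct, but there is nothing in the paper to compare it against: the paper states Krasnoselskii's theorem as a quoted result from the cited reference and gives no proof at all, using it only as a tool in Theorem 3.5. Your argument is the standard classical one: freeze $y$, note that $x\mapsto T_{1}x+T_{2}y$ is a contraction self-map of the closed (hence complete) set $\Omega$, define $B:\Omega\to\Omega$ by the resulting unique fixed point, derive the Lipschitz-type bound $\left\Vert By_{1}-By_{2}\right\Vert \leq (1-k)^{-1}\left\Vert T_{2}y_{1}-T_{2}y_{2}\right\Vert$, transfer the continuity and compactness of $T_{2}$ to $B$ through this bound, and close with Schauder's theorem on the bounded closed convex set $\Omega$. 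All steps are sound; the only unstated hypothesis you rely on (as does every version of this theorem) is that $\Omega$ is nonempty, and you correctly identify the two delicate points, namely that $k<1$ makes $(1-k)^{-1}$ finite and that the fixed points $By$ land in the closed set $\Omega$ so Cauchy subsequences converge there. One could quibble that you invoke Schauder, which is itself stronger machinery than the Banach theorem the paper states; that is unavoidable, since Krasnoselskii's theorem genuinely contains Schauder's as the special case $T_{1}=0$, so no more elementary route exists.
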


\begin{theorem}
\cite{BK} (Leray-Schauder Nonlinear Alternative). Let $X$ be a Banach space
and $\Omega \subseteq X$ closed and convex. Assume that $K$ is a relatively
open subset of $\Omega $ with $0\in K$ and $T:\overline{K}\longrightarrow
\Omega $ is a compact and continuous mapping. Then ethier

\begin{enumerate}
\item $T$ has a fixed point in $\overline{K},$ or

\item there exists $x\in \partial K$ such that $x=\lambda Tx$ for some $%
\lambda \in (0,1)$, where $\partial K$ is boundary of $K.$
\end{enumerate}
\end{theorem}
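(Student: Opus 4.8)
Since the Leray--Schauder nonlinear alternative is a classical result, the natural route is to deduce it from Schauder's fixed point theorem by a Urysohn-type gluing argument rather than to build it from scratch. The plan is to argue by contradiction: I assume that alternative (2) fails, i.e.\ there is no $x\in\partial K$ and $\lambda\in(0,1)$ with $x=\lambda Tx$, and that $T$ also has no fixed point on $\partial K$; I then manufacture a fixed point in the interior, so that alternative (1) is forced to hold.

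First I would introduce the set
\[
B=\{x\in\overline{K}:\ x=\lambda Tx\ \text{for some}\ \lambda\in[0,1]\}.
\]
Because $0\in K$ and $0=0\cdot T0$, the set $B$ is nonempty. The compactness of $T$ is exactly what makes $B$ closed: if $x_n=\lambda_n Tx_n$ with $x_n\to x$, then relative compactness of $T(\overline{K})$ lets me extract a convergent subsequence of $\lambda_n Tx_n$ and pass to the limit to obtain $x=\lambda Tx$. Next I would check that under the standing assumptions $B$ avoids $\partial K$: a point $x\in B\cap\partial K$ satisfies $x=\lambda Tx$, and the three sub-cases $\lambda=0$ (forcing $x=0\in K$), $\lambda\in(0,1)$ (excluded by the failure of (2)), and $\lambda=1$ (a boundary fixed point, excluded) are all impossible. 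Hence $B$ is a closed subset of $\overline{K}$ contained in the relatively open set $K$.

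With $B$ and $\partial K$ disjoint closed sets, Urysohn's lemma supplies a continuous $\phi:\overline{K}\to[0,1]$ with $\phi\equiv 1$ on $B$ and $\phi\equiv 0$ on $\partial K$. I would then define
\[
N(x)=\begin{cases}\phi(x)\,Tx, & x\in\overline{K},\\ 0, & x\in\Omega\setminus\overline{K},\end{cases}
\]
which is well defined and continuous across the gluing boundary because $\phi$ vanishes on $\partial K$, and which maps the convex set $\Omega$ into itself since for $x\in\overline{K}$ the value $\phi(x)Tx=\phi(x)Tx+(1-\phi(x))\cdot 0$ is a convex combination of $Tx,0\in\Omega$. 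Crucially $N$ is compact, as its range lies in the closed convex hull of $\{0\}\cup T(\overline{K})$, which is compact. Schauder's fixed point theorem then yields a point $x^{*}=N(x^{*})$.

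Finally I would unwind the conclusion. If $x^{*}\notin\overline{K}$ then $x^{*}=0\in K$, a contradiction, so $x^{*}\in\overline{K}$ and $x^{*}=\phi(x^{*})Tx^{*}$; this places $x^{*}\in B$, whence $\phi(x^{*})=1$ and $x^{*}=Tx^{*}$ is the sought fixed point, i.e.\ alternative (1) holds. The main obstacle is the construction of the auxiliary map $N$: one must simultaneously guarantee continuity across $\partial K$, invariance of the convex set $\Omega$, and relative compactness of the image so that Schauder's theorem applies---this is precisely where the compactness of $T$ and the convexity of $\Omega$ are indispensable.
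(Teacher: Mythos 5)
Your proof cannot be compared against an in-paper argument, because the paper does not prove this theorem: it is stated in the Preliminaries as a known result imported from \cite{BK} and then used as a black box in Theorem \ref{Th2}. What you have reconstructed is the standard derivation of the Leray--Schauder alternative from Schauder's fixed point theorem via a Urysohn gluing, and it is essentially correct as written. Three points deserve tightening. First, the closedness of $B=\{x\in\overline{K}:x=\lambda Tx,\ \lambda\in[0,1]\}$ needs only the continuity of $T$ together with the compactness of $[0,1]$: from $x_n=\lambda_n Tx_n$ and $x_n\to x$, extract $\lambda_{n_k}\to\lambda$ and use $Tx_{n_k}\to Tx$ (note $\lambda_n Tx_n=x_n$ already converges, so there is nothing to extract from the image sequence); the compactness of $T$ enters elsewhere, namely to make $T(\overline{K})$ bounded (so that $\phi(x)Tx\to 0$ as $x$ approaches $\partial K$, which your pasting step tacitly uses) and to make $N$ compact. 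Second, since $\Omega$ may be unbounded, it is cleaner to apply Schauder not to $N:\Omega\to\Omega$ directly but to its restriction to the compact convex set $C=\Omega\cap\overline{\mathrm{co}}\left(\{0\}\cup \overline{T(\overline{K})}\right)$, which contains $0$, satisfies $N(C)\subseteq C$, and is compact by Mazur's theorem; this avoids invoking a stronger ``Schauder for compact maps on closed convex sets'' version without statement. Third, two degenerate sub-cases should be flagged: the exclusion of $\lambda=0$ on the boundary uses that $0\in K$ and $K$ is relatively open, hence $0\notin\partial K$; and if $\partial K=\emptyset$ the metric Urysohn function $d(x,\partial K)/\left(d(x,\partial K)+d(x,B)\right)$ is undefined, so take $\phi\equiv 1$ and paste along the clopen set $\overline{K}=K$. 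None of these affects the validity of your argument, which matches the classical proof in the literature the paper cites.
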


\section{Existence and uniqueness theorems}

In this partition, we present the existence and uniqueness results of
fractional boundary value problem (\ref{1})-(\ref{2}) involving CK
fractional derivatives. To prove the existence of solutions to (\ref{1})-(%
\ref{2}), we need the following auxiliary Lemmas

\begin{lemma}
\label{L3} Let $\alpha ,\rho >0\ $and $y\in \mathcal{C(}J,%
\mathbb{R}
)\cap \mathcal{C}^{1}\mathcal{(}J,%
\mathbb{R}
).$ Then

\begin{enumerate}
\item the CK fractional deferential equation%
\begin{equation*}
^{c}D_{a^{+}}^{\alpha ;\rho }y(t)=0
\end{equation*}%
has a solutions
\begin{equation*}
y(t)=c_{0}+c_{1}\left( \frac{t^{\rho }-a^{\rho }}{\rho }\right) +c_{2}\left(
\frac{t^{\rho }-a^{\rho }}{\rho }\right) ^{2}+....+c_{n-1}\left( \frac{%
t^{\rho }-a^{\rho }}{\rho }\right) ^{n-1},
\end{equation*}%
where $c_{i}\in
\mathbb{R}
$, $i=0,1,2,...,n-1$ and $n=[\alpha ]+1.$

\item If $y,^{C}D_{a^{+}}^{\alpha ;\rho }y\in \mathcal{C(}J,%
\mathbb{R}
)\cap \mathcal{C}^{1}\mathcal{(}J,%
\mathbb{R}
)$. Then
\begin{equation}
I_{a^{+}}^{\alpha ;\rho }\text{ }^{C}D_{a^{+}}^{\alpha ;\rho
}y(t)=y(t)+c_{0}+c_{1}\left( \frac{t^{\rho }-a^{\rho }}{\rho }\right)
+c_{2}\left( \frac{t^{\rho }-a^{\rho }}{\rho }\right)
^{2}+....+c_{n-1}\left( \frac{t^{\rho }-a^{\rho }}{\rho }\right) ^{n-1},
\label{u}
\end{equation}
where $c_{i}\in
\mathbb{R}
$, $i=0,1,2,...,n-1$ and $n=[\alpha ]+1.$
\end{enumerate}
\end{lemma}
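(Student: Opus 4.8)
The plan is to prove both parts by exploiting the operator identities already established in Lemmas \ref{L2} and \ref{L5}, so that the argument reduces to bookkeeping rather than direct manipulation of the singular integrals.

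For part (1), I would start from the definition of the CK derivative via \eqref{e3}, which for general $\alpha$ subtracts the Taylor-like polynomial $\sum_{k=0}^{n-1}\frac{h_\rho^{(k)}(a)}{k!}\bigl(\frac{t^\rho-a^\rho}{\rho}\bigr)^k$ before applying $D_{a^+}^{\alpha;\rho}$. The third identity in Lemma \ref{L5}, namely $^{C}D_{a^+}^{\alpha;\rho}\bigl(\frac{t^\rho-a^\rho}{\rho}\bigr)^k=0$ for $k=0,1,\dots,n-1$, shows immediately that each monomial in the proposed solution is annihilated by the operator. Hence any linear combination $y(t)=\sum_{k=0}^{n-1}c_k\bigl(\frac{t^\rho-a^\rho}{\rho}\bigr)^k$ satisfies $^{c}D_{a^+}^{\alpha;\rho}y=0$ by linearity, so these are solutions. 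To argue that these are \emph{all} solutions, I would invoke the second operator identity of Lemma \ref{L2}, $^{c}D_{a^+}^{\alpha;\rho}\,I_{a^+}^{\alpha;\rho}h=h$: applying $I_{a^+}^{\alpha;\rho}$ to the equation $^{c}D_{a^+}^{\alpha;\rho}y=0$ and using the inversion formula of part (2) forces $y$ to coincide with a polynomial of the stated form, pinning down the dimension of the solution space as $n$.

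For part (2), the key is the composition rule relating $I_{a^+}^{\alpha;\rho}$ and $^{C}D_{a^+}^{\alpha;\rho}$. I would write $^{C}D_{a^+}^{\alpha;\rho}y = D_{a^+}^{\alpha;\rho}\bigl[y - P\bigr]$ where $P(t)=\sum_{k=0}^{n-1}\frac{y_\rho^{(k)}(a)}{k!}\bigl(\frac{t^\rho-a^\rho}{\rho}\bigr)^k$ is the subtracted polynomial, then apply $I_{a^+}^{\alpha;\rho}$ to both sides. The semigroup property $I^{\alpha;\rho}I^{\beta;\rho}=I^{\alpha+\beta;\rho}$ from Lemma \ref{L2}, together with the fundamental theorem for the Riemann–Liouville-type composition $I_{a^+}^{\alpha;\rho}D_{a^+}^{\alpha;\rho}g = g - \sum_{j}(\text{boundary terms})$, yields $I_{a^+}^{\alpha;\rho}\,{}^{C}D_{a^+}^{\alpha;\rho}y = y - P + (\text{terms from } D\text{-inversion})$. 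Collecting all the resulting monomials $\bigl(\frac{t^\rho-a^\rho}{\rho}\bigr)^k$ into fresh constants $c_k$ produces exactly \eqref{u}; the explicit values of the $c_k$ in terms of $y_\rho^{(k)}(a)$ are irrelevant since only the form of the representation is asserted.

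The main obstacle will be the composition $I_{a^+}^{\alpha;\rho}D_{a^+}^{\alpha;\rho}$ for the non-Caputo (Riemann–Liouville-type) operator $D_{a^+}^{\alpha;\rho}$, which does \emph{not} simply return the identity but carries boundary correction terms; the paper's Lemma \ref{L2} only gives the clean inversion $^{c}D\,I=\mathrm{id}$ for the \emph{Caputo} version. I would therefore either establish the needed $I^{\alpha;\rho}D^{\alpha;\rho}$ formula as an intermediate step (mirroring the classical Caputo case, where the subtraction of the Taylor polynomial is precisely what cancels the obstruction), or — more cleanly — work directly with the representation $^{C}D_{a^+}^{\alpha;\rho}y = I_{a^+}^{1-\alpha;\rho}y_\rho^{(1)}$ noted in the excerpt for $0<\alpha<1$, apply $I_{a^+}^{\alpha;\rho}$, use the semigroup law to get $I_{a^+}^{1;\rho}y_\rho^{(1)}$, and evaluate that first-order integral by the change of variables $u=\frac{s^\rho}{\rho}$ which linearizes $\gamma=t^{1-\rho}\frac{d}{dt}$ into an ordinary derivative. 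This reduces the whole composition to the ordinary fundamental theorem of calculus and makes the emergence of the single constant $c_0$ transparent in the $0<\alpha<1$ case that actually drives the boundary value problem \eqref{1}--\eqref{2}.
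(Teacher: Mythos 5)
Your proposal is correct, and for part (2) it takes a genuinely different route from the paper's. The paper never confronts the composition $I_{a^{+}}^{\alpha ;\rho }D_{a^{+}}^{\alpha ;\rho }$ that you flag as the main obstacle: instead it argues backwards, applying $^{C}D_{a^{+}}^{\alpha ;\rho }$ to the difference $I_{a^{+}}^{\alpha ;\rho }\,{}^{C}D_{a^{+}}^{\alpha ;\rho }y(t)-y(t)$, killing it with the inversion $^{C}D_{a^{+}}^{\alpha ;\rho }I_{a^{+}}^{\alpha ;\rho }h=h$ from Lemma \ref{L2}, and then invoking part (1) to write the difference as a polynomial in $\bigl(\frac{t^{\rho }-a^{\rho }}{\rho }\bigr)$. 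That is shorter, but it silently uses part (1) in the strong form that \emph{every} solution of $^{c}D_{a^{+}}^{\alpha ;\rho }y=0$ has the stated polynomial shape, whereas the paper's proof of part (1) (like the first half of yours) only checks the easy direction, via Lemma \ref{L5}, that each monomial is annihilated. Your forward route --- $^{C}D_{a^{+}}^{\alpha ;\rho }y=I_{a^{+}}^{1-\alpha ;\rho }y_{\rho }^{(1)}$, the semigroup law of Lemma \ref{L2}, and then $I_{a^{+}}^{1;\rho }y_{\rho }^{(1)}(t)=\int_{a}^{t}s^{\rho -1}s^{1-\rho }y^{\prime }(s)\,ds=y(t)-y(a)$ --- needs no such completeness claim, and it then delivers exactly that missing completeness as a corollary of part (2); so your logical ordering (part (2) proved directly, then the all-solutions half of part (1) deduced from it) actually repairs a circularity the paper leaves unaddressed. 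Three minor points. First, as written your computation covers $0<\alpha <1$, which, as you note, is all that problem (\ref{1})--(\ref{2}) uses; for general $\alpha >0$ one takes $^{C}D_{a^{+}}^{\alpha ;\rho }y=I_{a^{+}}^{n-\alpha ;\rho }y_{\rho }^{(n)}$ and iterates the fundamental theorem in the variable $u=t^{\rho }/\rho $, obtaining $c_{k}=-y_{\rho }^{(k)}(a)/k!$ (the lemma's hypothesis $y\in \mathcal{C}^{1}$ is then too weak, but that is a defect of the statement, not of your argument). Second, in your completeness step for part (1) the identity actually used is formula (\ref{u}), not $^{c}D\,I=\mathrm{id}$, which plays no role there. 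Third, your first suggested route through an $I^{\alpha ;\rho }D^{\alpha ;\rho }$ formula with boundary corrections should simply be discarded: it proves more than the lemma needs, and your second route (or the paper's backward argument) avoids it entirely.
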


\begin{proof}
The first part, follows immediately from the fact%
\begin{equation*}
^{C}D_{a^{+}}^{\alpha ;\rho }\left( \frac{t^{\rho }-a^{\rho }}{\rho }\right)
^{k}=0,\text{ \ }k=0,1,2,...,n-1.
\end{equation*}

To prove the second part, we apply the operator $^{C}D_{a^{+}}^{\alpha ;\rho
}$ to $I_{a^{+}}^{\alpha ;\rho }$ $^{C}D_{a^{+}}^{\alpha ;\rho }y(t)-y(t),$
and use Lemma \ref{L2}, it follows that%
\begin{equation*}
^{C}D_{a^{+}}^{\alpha ;\rho }\left[ I_{a^{+}}^{\alpha ;\rho }\text{ }%
^{C}D_{a^{+}}^{\alpha ;\rho }y(t)-y(t)\right] =\text{ }^{C}D_{a^{+}}^{\alpha
;\rho }I_{a^{+}}^{\alpha ;\rho }\text{ }^{C}D_{a^{+}}^{\alpha ;\rho
}y(t)-^{C}D_{a^{+}}^{\alpha ;\rho }y(t)=0.
\end{equation*}

From the first part, we deduce there exist $c_{i}\in
\mathbb{R}
$ ($i=0,1,2,...,n-1$) such that%
\begin{equation*}
I_{a^{+}}^{\alpha ;\rho }\text{ }^{C}D_{a^{+}}^{\alpha ;\rho
}y(t)-y(t)=c_{0}+c_{1}\left( \frac{t^{\rho }-a^{\rho }}{\rho }\right)
+c_{2}\left( \frac{t^{\rho }-a^{\rho }}{\rho }\right)
^{2}+....+c_{n-1}\left( \frac{t^{\rho }-a^{\rho }}{\rho }\right) ^{n-1},
\end{equation*}%
which implies the law of composition (\ref{u}). The proof is completed.
\end{proof}

\begin{lemma}
\label{L1} Let $0<\alpha <1,$ $\rho >0$ and $g\in \mathcal{C}(J,%
\mathbb{R}
).$ Then the linear anti-preiodic boundary value problem
\begin{equation}
^{c}D_{a^{+}}^{\alpha ;\rho }y(t)=g(t),\text{ \ }t\in J,  \label{eq1}
\end{equation}%
\begin{equation}
y(a)+y(b)=0,\qquad \qquad  \label{eq2}
\end{equation}%
has a unique solution defined by
\begin{equation}
y(t)=-\frac{1}{2}\frac{\rho ^{1-\alpha }}{\Gamma (\alpha )}%
\int_{a}^{b}s^{\rho -1}(b^{\rho }-s^{\rho })^{\alpha -1}g(s)ds+\frac{\rho
^{1-\alpha }}{\Gamma (\alpha )}\int_{a}^{t}s^{\rho -1}(t^{\rho }-s^{\rho
})^{\alpha -1}g(s)ds.  \label{7}
\end{equation}
\end{lemma}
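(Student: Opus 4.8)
The plan is to convert the boundary value problem into an equivalent integral equation by applying the Katugampola fractional integral $I_{a^{+}}^{\alpha ;\rho }$ to both sides of \eqref{eq1}, and then to fix the single free constant that appears by imposing the anti-periodic condition \eqref{eq2}. This forward construction simultaneously yields the explicit formula and the uniqueness, and a short converse check supplies existence.

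First I would apply $I_{a^{+}}^{\alpha ;\rho }$ to \eqref{eq1}. Since $0<\alpha <1$ forces $n=[\alpha ]+1=1$, the composition identity \eqref{u} of Lemma~\ref{L3} collapses to a single constant, giving
\begin{equation*}
I_{a^{+}}^{\alpha ;\rho }\,{}^{c}D_{a^{+}}^{\alpha ;\rho }y(t)=y(t)+c_{0},
\end{equation*}
so that $y(t)=I_{a^{+}}^{\alpha ;\rho }g(t)-c_{0}$ for some $c_{0}\in \mathbb{R}$. Writing $I_{a^{+}}^{\alpha ;\rho }g$ out via Definition~\ref{de1} and using the elementary rewriting $\left( \tfrac{t^{\rho }-s^{\rho }}{\rho }\right) ^{\alpha -1}=\rho ^{1-\alpha }(t^{\rho }-s^{\rho })^{\alpha -1}$ puts this in the form
\begin{equation*}
y(t)=\frac{\rho ^{1-\alpha }}{\Gamma (\alpha )}\int_{a}^{t}s^{\rho -1}(t^{\rho }-s^{\rho })^{\alpha -1}g(s)\,ds-c_{0},
\end{equation*}
which already reproduces the second term of \eqref{7}.

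Next I would determine $c_{0}$ from the boundary data. Evaluating at $t=a$ makes the integral vanish, so $y(a)=-c_{0}$, while at $t=b$ we get $y(b)=\tfrac{\rho ^{1-\alpha }}{\Gamma (\alpha )}\int_{a}^{b}s^{\rho -1}(b^{\rho }-s^{\rho })^{\alpha -1}g(s)\,ds-c_{0}$. Substituting into \eqref{eq2} gives the linear relation $-2c_{0}+\tfrac{\rho ^{1-\alpha }}{\Gamma (\alpha )}\int_{a}^{b}s^{\rho -1}(b^{\rho }-s^{\rho })^{\alpha -1}g(s)\,ds=0$; solving for $c_{0}$ and inserting it back produces exactly \eqref{7}. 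Because $c_{0}$ is forced to this single value, any solution of \eqref{eq1}--\eqref{eq2} must coincide with \eqref{7}, which establishes uniqueness.

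Finally, for existence I would verify that \eqref{7} genuinely solves the problem. Applying ${}^{c}D_{a^{+}}^{\alpha ;\rho }$ to \eqref{7}, the boundary integral is a constant and is annihilated by Lemma~\ref{L5} (${}^{c}D_{a^{+}}^{\alpha ;\rho }(1)=0$), while the $I_{a^{+}}^{\alpha ;\rho }g$ term returns $g$ through the cancellation law ${}^{c}D_{a^{+}}^{\alpha ;\rho }\,{}^{\rho }I_{a^{+}}^{\alpha }h=h$ of Lemma~\ref{L2}; a direct evaluation at $t=a$ and $t=b$ confirms $y(a)+y(b)=0$. The computation is essentially routine, and there is no serious obstacle; the only points requiring care are using the correctly collapsed (single-constant) form of the composition identity \eqref{u}, tracking the sign of $c_{0}$ when solving the boundary relation, and the kernel rewriting of the constant $\rho ^{1-\alpha }$.
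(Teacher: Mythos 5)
Your proposal is correct and follows essentially the same route as the paper's own proof: apply $I_{a^{+}}^{\alpha ;\rho }$ to \eqref{eq1}, invoke the single-constant case of the composition identity \eqref{u} from Lemma~\ref{L3}, solve for the constant via $y(a)+y(b)=0$, and verify the converse directly. If anything, your write-up is slightly more complete than the paper's, since you make the converse check explicit (via Lemma~\ref{L5} and Lemma~\ref{L2}) where the paper only states that it ``follows by direct calculations.''
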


\begin{proof}
Applying the Katugampola fractional integral of order $\alpha $ to both
sides of equation in (\ref{eq1}), and using Lemma \ref{L3}, we get%
\begin{equation}
y(t)=c_{0}+\frac{\rho ^{1-\alpha }}{\Gamma (\alpha )}\int_{a}^{t}s^{\rho
-1}(t^{\rho }-s^{\rho })^{\alpha -1}g(s)ds\text{,}  \label{8}
\end{equation}%
where $c_{0}\in
\mathbb{R}
$. Using (\ref{8}) in the boundary conditions of (\ref{eq2}), we get
\begin{equation*}
c_{0}=-\frac{1}{2}\frac{\rho ^{1-\alpha }}{\Gamma (\alpha )}%
\int_{a}^{b}s^{\rho -1}(b^{\rho }-s^{\rho })^{\alpha -1}g(s)ds.
\end{equation*}%
which, on substituting in (\ref{8}), yields the solution (\ref{7}). The
converse follows by direct calculations. The proof is completed.
\end{proof}

\begin{lemma}
Assume that (H$_{1}$) holds. A function $y(t)$ solves the problem (\ref{1})-(%
\ref{2}) if and only if it is a fixed-point of the operator $T:\mathcal{C}(J,%
\mathbb{R}
)\rightarrow \mathcal{C}(J,%
\mathbb{R}
)$ defined by%
\begin{eqnarray}
Ty(t) &=&-\frac{1}{2}\frac{\rho ^{1-\alpha }}{\Gamma (\alpha )}%
\int_{a}^{b}s^{\rho -1}(b^{\rho }-s^{\rho })^{\alpha -1}f(s,y(s))ds  \notag
\\
&&+\frac{\rho ^{1-\alpha }}{\Gamma (\alpha )}\int_{a}^{t}s^{\rho -1}(t^{\rho
}-s^{\rho })^{\alpha -1}f(s,y(s))ds.  \label{r}
\end{eqnarray}%
Our first result is based on the Banach fixed point theorem to obtain the
existence of a unique solution of problem (\ref{1})-(\ref{2}).

\begin{theorem}
\label{Th1} Assume that $f:J\times
\mathbb{R}
\rightarrow
\mathbb{R}
$ be a continuous function satisfying the Lipschitz condition:
\end{theorem}
\end{lemma}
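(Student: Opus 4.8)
The plan is to reduce this nonlinear equivalence to the linear result already established in Lemma~\ref{L1}. The key observation is that, for any fixed $y \in \mathcal{C}(J,\mathbb{R})$, the function $g(t) := f(t,y(t))$ is itself a fixed element of $\mathcal{C}(J,\mathbb{R})$, because hypothesis (H$_{1}$) guarantees the continuity of $f$ and hence the continuity of the composition $t \mapsto f(t,y(t))$. With this substitution, the operator $T$ in~(\ref{r}) is precisely the solution formula~(\ref{7}) of the linear anti-periodic boundary value problem~(\ref{eq1})--(\ref{eq2}) associated with that particular $g$. The proof will therefore consist of a chain of equivalences linking ``$y$ solves~(\ref{1})--(\ref{2})'' to ``$y = Ty$'' by passing through Lemma~\ref{L1}.

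First I would establish the forward implication. Suppose $y$ solves~(\ref{1})--(\ref{2}); then $y$ satisfies ${}^{c}D_{a^{+}}^{\alpha;\rho} y(t) = f(t,y(t))$ together with $y(a)+y(b)=0$. Setting $g(t) := f(t,y(t)) \in \mathcal{C}(J,\mathbb{R})$, the function $y$ is a solution of the linear problem~(\ref{eq1})--(\ref{eq2}) with this right-hand side. By the uniqueness assertion of Lemma~\ref{L1}, $y$ must coincide with the explicit formula~(\ref{7}) for that $g$, which, after unwinding the substitution $g = f(\cdot,y(\cdot))$, is exactly the expression $Ty$ in~(\ref{r}). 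Hence $y = Ty$.

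For the converse I would start from a fixed point $y = Ty$ with $y \in \mathcal{C}(J,\mathbb{R})$. Writing again $g(t) := f(t,y(t))$, the identity $y = Ty$ states that $y$ equals formula~(\ref{7}) associated with $g$; that is, $y(t) = c_{0} + I_{a^{+}}^{\alpha;\rho} g(t)$ with $c_{0}$ the constant carried by the first integral in~(\ref{r}). Applying ${}^{c}D_{a^{+}}^{\alpha;\rho}$ to both sides and using Lemma~\ref{L2} to collapse ${}^{c}D_{a^{+}}^{\alpha;\rho} I_{a^{+}}^{\alpha;\rho} g = g$ and Lemma~\ref{L5} to annihilate the constant term, I recover ${}^{c}D_{a^{+}}^{\alpha;\rho} y(t) = g(t) = f(t,y(t))$, which is~(\ref{1}). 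Evaluating~(\ref{r}) at $t=a$ (where the second integral vanishes) and at $t=b$ (where it reproduces $-2c_{0}$) and adding gives $y(a)+y(b)=0$, which is~(\ref{2}). Thus $y$ solves the problem.

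The step I expect to be the most delicate is ensuring the substitution $g = f(\cdot,y(\cdot))$ is legitimate at every stage, i.e., confirming that $T$ actually maps $\mathcal{C}(J,\mathbb{R})$ into itself so that both $y$ and $Ty$ are admissible inputs to Lemma~\ref{L1}. This amounts to checking that $t \mapsto \int_{a}^{t} s^{\rho-1}(t^{\rho}-s^{\rho})^{\alpha-1} f(s,y(s))\,ds$ depends continuously on $t$, which follows from the boundedness of the Katugampola fractional integral on continuous functions recorded earlier, together with the continuity of $f$ from (H$_{1}$). Once this regularity is secured, the equivalence is immediate, since the nonlinear formula~(\ref{r}) and the linear formula~(\ref{7}) differ only by the cosmetic replacement of $g$ by $f(\cdot,y(\cdot))$.
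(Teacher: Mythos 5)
Your proposal is correct and coincides with the paper's (implicit) treatment: the paper states this lemma without a separate proof precisely because it is the special case $g(t)=f(t,y(t))$ of Lemma~\ref{L1}, which is exactly the reduction you carry out, with the converse direction (applying $^{c}D_{a^{+}}^{\alpha ;\rho }$ via Lemmas~\ref{L2} and~\ref{L5} and checking $y(a)+y(b)=c_{0}-c_{0}=0$) matching the ``direct calculations'' invoked in the proof of Lemma~\ref{L1}. One minor inaccuracy: the continuity of $t\mapsto f(t,y(t))$ follows from the standing assumption in Theorem~\ref{Th1} that $f$ is continuous, not from (H$_{1}$) itself, which is only a Lipschitz condition in the second variable.
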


\begin{description}
\item[(H$_{1}$)] There exists a constant $L_{f}>0$ such that :
\begin{equation*}
\left\vert f(t,x)-f(t,y)\right\vert \leq L\left\vert x-y\right\vert ,\text{
\ }\forall t\in J,\text{ }x,y\in
\mathbb{R}
.
\end{equation*}%
If $L\mathcal{N}<1,$ where $\mathcal{N}:=\frac{3}{2}\frac{\rho ^{-\alpha }}{%
\Gamma (\alpha +1)}(b^{\rho }-a^{\rho })^{\alpha },$ then the boundary value
problem (\ref{1})-(\ref{2}) has a unique solution on $J$.
\end{description}

\begin{proof}
Now, we first show that the operator $T:\mathcal{C}(J,%
\mathbb{R}
)\rightarrow \mathcal{C}(J,%
\mathbb{R}
)$ defined by (\ref{r}) is well-defined, i.e., we show that $T\mathcal{B}%
_{r}\subseteq \mathcal{B}_{r}\ $where
\begin{equation}
\mathcal{B}_{r}=\{y\in \mathcal{C}(J,%
\mathbb{R}
),\left\Vert y\right\Vert \leq r\},  \label{ew}
\end{equation}
with choose $r\geq \frac{\mu \mathcal{N}}{1-L\mathcal{N}},$ and $\sup_{t\in
J}\left\vert f(t,0)\right\vert =\mu <\infty .$ For any $y\in \mathcal{B}_{r}$%
, we obtain by our hypotheses that%
\begin{eqnarray*}
\left\vert Ty(t)\right\vert &\leq &\underset{t\in J}{\sup }\left\vert
Ty(t)\right\vert \\
&\leq &\underset{t\in J}{\sup }\left\{ \frac{1}{2}\frac{\rho ^{1-\alpha }}{%
\Gamma (\alpha )}\int_{a}^{b}s^{\rho -1}(b^{\rho }-s^{\rho })^{\alpha
-1}\left\vert f(s,y(s))-f(t,0)\right\vert +\left\vert f(t,0)\right\vert
ds\right. \\
&&\left. +\frac{\rho ^{1-\alpha }}{\Gamma (\alpha )}\int_{a}^{t}s^{\rho
-1}(t^{\rho }-s^{\rho })^{\alpha -1}\left\vert f(s,y(s))-f(t,0)\right\vert
+\left\vert f(t,0)\right\vert ds\right\} \\
&\leq &\frac{3}{2}\frac{\rho ^{-\alpha }}{\Gamma (\alpha +1)}(b^{\rho
}-a^{\rho })^{\alpha }\left( Lr+\mu \right) \\
&<&\left( Lr+\mu \right) \mathcal{N} \\
&\leq &r.
\end{eqnarray*}

which implies that $Ty\in \mathcal{B}_{r}.$Moreover, by (\ref{r}), and
lammas \ref{L2}, \ref{L5}, we obtain
\begin{equation*}
\text{ }^{C}D_{a^{+}}^{\alpha ;\rho }Ty(t)=\text{ }^{C}D_{a^{+}}^{\alpha
;\rho }\text{ }I_{a^{+}}^{\alpha ;\rho }f(t,y(t))=f(t,y(t)).
\end{equation*}

Since $f(t,y(t))$ is continuous on $J,$ Then $^{c}D_{a^{+}}^{\alpha ;\rho
}Ty(t)$ is continuous on $J,$ that is $T\mathcal{B}_{r}\subseteq \mathcal{B}%
_{r}.$

Next, we apply the Banach fixed point theorem to prove that $T$ has a fixed
point. Indeed, it enough to show that $T$ is contraction map$.$ Let $%
y_{1},y_{2}\in \mathcal{C}(J,%
\mathbb{R}
)$ and for $t\in J,$ we have
\begin{eqnarray*}
\left\vert Ty_{1}(t)-Ty_{2}(t)\right\vert &\leq &\frac{1}{2}\frac{\rho
^{1-\alpha }}{\Gamma (\alpha )}\int_{a}^{b}s^{\rho -1}(b^{\rho }-s^{\rho
})^{\alpha -1}\left\vert f(s,y_{1}(s))-f(s,y_{2}(s))\right\vert ds \\
&&+\frac{\rho ^{1-\alpha }}{\Gamma (\alpha )}\int_{a}^{t}s^{\rho -1}(t^{\rho
}-s^{\rho })^{\alpha -1}\left\vert f(s,y_{1}(s))-f(s,y_{2}(s))\right\vert ds
\\
&\leq &\frac{1}{2}\frac{\rho ^{1-\alpha }}{\Gamma (\alpha )}%
\int_{a}^{b}s^{\rho -1}(b^{\rho }-s^{\rho })^{\alpha -1}L\left\Vert
y_{1}-y_{2}\right\Vert ds \\
&&+\frac{\rho ^{1-\alpha }}{\Gamma (\alpha )}\int_{a}^{t}s^{\rho -1}(t^{\rho
}-s^{\rho })^{\alpha -1}L\left\Vert y_{1}-y_{2}\right\Vert ds \\
&\leq &\frac{3}{2}\frac{\rho ^{-\alpha }L}{\Gamma (\alpha +1)}(b^{\rho
}-s^{\rho })^{\alpha }\left\Vert y_{1}-y_{2}\right\Vert \\
&=&L\mathcal{N}\left\Vert y_{1}-y_{2}\right\Vert ,
\end{eqnarray*}

which gives $\left\Vert Ty_{1}-Ty_{2}\right\Vert \leq L\mathcal{N}\left\Vert
y_{1}-y_{2}\right\Vert $. The inequality $L\mathcal{N}<1$ shows that $T$ is
contraction mapping. As a consequence of Banach fixed point theorem. Then
the problem (\ref{1})-(\ref{2}) has a unique solution. This complete the
proof.
\end{proof}

Next, we prove an existence result for the problem (\ref{1})-(\ref{2}) by
using Leray--Schauder nonlinear alternative.

\begin{theorem}
\label{Th2} Assume that $f:J\times
\mathbb{R}
\rightarrow
\mathbb{R}
$ is continuous on $J.$ In addition, we assume that:
\end{theorem}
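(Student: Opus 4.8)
The plan is to realize solutions of (\ref{1})--(\ref{2}) as fixed points of the operator $T$ defined in (\ref{r}), whose fixed points coincide with the solutions of the boundary value problem by the equivalence lemma stated before Theorem \ref{Th1} (itself resting on Lemma \ref{L1}), and then to apply the Leray--Schauder nonlinear alternative on the Banach space $X=\mathcal{C}(J,\mathbb{R})$ with $\Omega=X$. The two ingredients I must supply are: first, the complete continuity of $T$; and second, an \emph{a priori} bound on every possible solution of the one-parameter family $y=\lambda Ty$, $\lambda\in(0,1)$. Granting these, the alternative will force $T$ to possess a fixed point, which is the desired solution.

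First I would verify that $T$ is completely continuous. Boundedness on bounded sets is immediate: for $y$ in a ball $\mathcal{B}_{r}$, continuity of $f$ on the compact set $J\times[-r,r]$ bounds $|f(s,y(s))|$, and integrating the weighted kernel $s^{\rho-1}(t^{\rho}-s^{\rho})^{\alpha-1}$ by means of Lemma \ref{L5} yields $\|Ty\|\le \mathcal{N}\,\sup_{J\times[-r,r]}|f|$, so $T(\mathcal{B}_{r})$ is uniformly bounded. Equicontinuity is the more delicate point: for $a\le t_{1}<t_{2}\le b$ the constant boundary term in (\ref{r}) cancels in $Ty(t_{2})-Ty(t_{1})$, so I only need to control the difference of the two Katugampola integrals having distinct upper limits $t_{1}$ and $t_{2}$. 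Splitting this difference in the usual way and exploiting $0<\alpha<1$, each piece is dominated by a quantity of the form $C\big[(t_{2}^{\rho}-a^{\rho})^{\alpha}-(t_{1}^{\rho}-a^{\rho})^{\alpha}\big]$ together with a term over $[t_{1},t_{2}]$, both of which tend to $0$ as $t_{2}\to t_{1}$ \emph{uniformly} in $y\in\mathcal{B}_{r}$. Arzel\`a--Ascoli then gives relative compactness of $T(\mathcal{B}_{r})$, while continuity of $T$ follows from the continuity of $f$ and the dominated convergence theorem applied to the two integrals.

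Next I would establish the a priori bound. Suppose $y=\lambda Ty$ for some $\lambda\in(0,1)$. Inserting the assumed growth control on $f$ (of the form $|f(t,y)|\le p(t)\psi(|y|)$ with $p$ continuous and $\psi$ nondecreasing, or a comparable linear/sublinear bound) into (\ref{r}) and estimating each integral exactly as in the boundedness step, I obtain an inequality $\|y\|\le \mathcal{N}\,\|p\|_{\mathcal{C}}\,\psi(\|y\|)$, independent of $\lambda$. The supplementary hypothesis of the theorem — the existence of a constant $M>0$ with $M>\mathcal{N}\,\|p\|_{\mathcal{C}}\,\psi(M)$ — then excludes $\|y\|=M$, so every member of the family lies in the open ball $K=\{y\in X:\|y\|<M\}$.

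Finally, taking $\overline{K}$ as the closed ball of radius $M$, complete continuity makes $T:\overline{K}\to X$ compact and continuous with $0\in K$, and the a priori bound shows that alternative (2) of the nonlinear alternative cannot occur, since no $y\in\partial K$ satisfies $y=\lambda Ty$ for $\lambda\in(0,1)$. Hence alternative (1) holds, $T$ has a fixed point in $\overline{K}$, and by Lemma \ref{L1} this fixed point solves (\ref{1})--(\ref{2}). I expect the equicontinuity estimate with the singular, $\rho$-weighted kernel to be the principal technical obstacle, since one must split the integral carefully and use $\alpha<1$ to keep all bounds uniform over the ball; closing the a priori estimate is then routine once the precise form of the growth hypothesis is fixed.
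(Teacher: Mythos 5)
Your proposal follows essentially the same route as the paper's own proof: complete continuity of the operator $T$ from (\ref{r}) via uniform boundedness and the split equicontinuity estimate for the singular kernel followed by Arzel\`a--Ascoli, then an a priori bound $\left\Vert y\right\Vert \leq \mathcal{N}\left\Vert \eta \right\Vert \psi (\left\Vert y\right\Vert )$ for solutions of $y=\lambda Ty$, with (H$_{3}$) excluding $\left\Vert y\right\Vert =M$ so that the Leray--Schauder alternative yields a fixed point in the closed ball of radius $M$. The argument is correct as written; indeed your concluding statement that the fixed point lies in $\overline{K}$ is more carefully phrased than the paper's, which misstates it as lying in $\partial \mathcal{U}$.
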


\begin{description}
\item[(H$_{2}$)] There exist two functions $\psi :%
\mathbb{R}
^{+}\rightarrow
\mathbb{R}
^{+}$ be a nondecreasing continuous$,$ and $\eta :J\rightarrow
\mathbb{R}
^{+}$ is a continuous such that
\begin{equation*}
\left\vert f(t,y)\right\vert \leq \eta (t)\psi (\left\Vert y\right\Vert ),%
\text{ \ }\forall t\in J,\text{ }y\in
\mathbb{R}
.
\end{equation*}

\item[(H$_{3}$)] There exists a constant $M>0$ such that
\begin{equation*}
\frac{\mathcal{N}\left\Vert \eta \right\Vert \psi \left( M\right) }{M}<1,
\end{equation*}%
where $\mathcal{N}\ $is defined as in Theorem \ref{Th1}$.$ Then the boundary
value problem (\ref{1})-(\ref{2}) has at least one solution on $J$.
\end{description}

\begin{proof}
Firstly, we will prove that the operator $T$ defined by (\ref{r}), maps
bounded sets into bounded sets in $\mathcal{C}(J,%
\mathbb{R}
)$. For a positive number $r$, let $\mathcal{B}_{r}$ be a bounded ball in $%
\mathcal{C}(J,%
\mathbb{R}
)$ defined by (\ref{ew}). Then, for $t\in J$, we have%
\begin{eqnarray*}
\left\vert Ty(t)\right\vert  &\leq &\frac{1}{2}\frac{\rho ^{1-\alpha }}{%
\Gamma (\alpha )}\int_{a}^{b}s^{\rho -1}(b^{\rho }-s^{\rho })^{\alpha
-1}\left\vert f(s,y(s))\right\vert ds \\
&&+\frac{\rho ^{1-\alpha }}{\Gamma (\alpha )}\int_{a}^{t}s^{\rho -1}(t^{\rho
}-s^{\rho })^{\alpha -1}\left\vert f(s,y(s))\right\vert ds \\
&\leq &\frac{1}{2}\frac{\rho ^{1-\alpha }}{\Gamma (\alpha )}%
\int_{a}^{b}s^{\rho -1}(b^{\rho }-s^{\rho })^{\alpha -1}\eta (s)\psi \left(
\left\Vert y\right\Vert \right) ds \\
&&+\frac{\rho ^{1-\alpha }}{\Gamma (\alpha )}\int_{a}^{t}s^{\rho -1}(t^{\rho
}-s^{\rho })^{\alpha -1}\eta (s)\psi \left( \left\Vert y\right\Vert \right)
ds \\
&\leq &\frac{3}{2}\frac{\rho ^{-\alpha }}{\Gamma (\alpha +1)}(b^{\rho
}-a^{\rho })^{\alpha }\left\Vert \eta \right\Vert \psi \left( r\right)  \\
&=&\mathcal{N}\left\Vert \eta \right\Vert \psi \left( r\right) .
\end{eqnarray*}

In view of (H$_{3}$), we obtain $\left\Vert Ty\right\Vert \leq r,$ i.e. $%
\left( T\mathcal{B}_{r}\right) $ is uniformly bounded.

Next, we prove that $T$ maps bounded sets into equicontinuous sets of $%
\mathcal{C}(J,%
\mathbb{R}
)$, i.e. $\left( T\mathcal{B}_{r}\right) $ is equicontinuous. Let $%
t_{1},t_{2}\in J,$ with $t_{1}<t_{2}$ and for any $y$ $\in \mathcal{B}_{r}$,
then we have%
\begin{eqnarray*}
\left\vert Ty(t_{1})-Ty(t_{2})\right\vert &\leq &\frac{\rho ^{1-\alpha }}{%
\Gamma (\alpha )}\int_{a}^{t_{1}}s^{\rho -1}\left[ (t_{1}^{\rho }-s^{\rho
})^{\alpha -1}-(t_{2}^{\rho }-s^{\rho })^{\alpha -1}\right] \left\vert
f(s,y(s)\right\vert )ds \\
&&+\frac{\rho ^{1-\alpha }}{\Gamma (\alpha )}\int_{t_{1}}^{t_{2}}s^{\rho
-1}(t_{2}^{\rho }-s^{\rho })^{\alpha -1}\left\vert f(s,y(s))\right\vert ds \\
&\leq &\frac{\rho ^{1-\alpha }}{\Gamma (\alpha )}\int_{a}^{t_{1}}s^{\rho -1}%
\left[ (t_{1}^{\rho }-s^{\rho })^{\alpha -1}-(t_{2}^{\rho }-s^{\rho
})^{\alpha -1}\right] \eta (s)\psi \left( \left\Vert y\right\Vert \right) ds
\\
&&+\frac{\rho ^{1-\alpha }}{\Gamma (\alpha )}\int_{t_{1}}^{t_{2}}s^{\rho
-1}(t_{2}^{\rho }-s^{\rho })^{\alpha -1}\eta (s)\psi \left( \left\Vert
y\right\Vert \right) ds \\
&\leq &\left\Vert \eta \right\Vert \psi \left( r\right) \frac{\rho
^{1-\alpha }}{\Gamma (\alpha )}\int_{a}^{t_{1}}s^{\rho -1}\left[
(t_{1}^{\rho }-s^{\rho })^{\alpha -1}-(t_{2}^{\rho }-s^{\rho })^{\alpha -1}%
\right] ds \\
&&+\left\Vert \eta \right\Vert \psi \left( r\right) \frac{\rho ^{1-\alpha }}{%
\Gamma (\alpha )}\int_{t_{1}}^{t_{2}}s^{\rho -1}(t_{2}^{\rho }-s^{\rho
})^{\alpha -1}ds \\
&\leq &\frac{2\rho ^{-\alpha }\left\Vert \eta \right\Vert \psi \left(
r\right) }{\Gamma (\alpha +1)}(t_{2}^{\rho }-t_{1}^{\rho })^{\alpha }.
\end{eqnarray*}

As $t_{1}\longrightarrow t_{2}$ the right-hand side of the preceding
inequality is not dependent on $y$ and goes to zero. Consequently, $T%
\mathcal{B}_{r}$ is equicontinuous i.e.%
\begin{equation*}
\left\vert Ty(t_{1})-Ty(t_{2})\right\vert \rightarrow 0\text{, \ }\forall
\text{ }\left\vert t_{2}-t_{1}\right\vert \rightarrow 0,\text{ }y\in
\mathcal{B}_{r}.
\end{equation*}%
So, the compactness of $T$ follows by Ascoli Arzela's theorem, we conclude
that $T$ is completely continuous.

Finally, we show there exists an open set $\mathcal{U}\subseteq \mathcal{C}%
(J,%
\mathbb{R}
)$ with $y\neq \lambda Ty$ for $\lambda \in (0,1)$ and $y\in \partial
\mathcal{U}$.

Let $y\in \mathcal{B}_{r}$ be any solution of
\begin{equation}
y=\lambda Ty,\text{ }\lambda \in (0,1).  \label{E3'}
\end{equation}

Then%
\begin{eqnarray}
\left\vert y(t)\right\vert  &=&\lambda \left\vert Ty(t)\right\vert   \notag
\\
&<&\frac{1}{2}\frac{\rho ^{1-\alpha }}{\Gamma (\alpha )}\int_{a}^{b}s^{\rho
-1}(b^{\rho }-s^{\rho })^{\alpha -1}\left\vert f(s,y(s))\right\vert ds
\notag \\
&&+\frac{\rho ^{1-\alpha }}{\Gamma (\alpha )}\int_{a}^{t}s^{\rho -1}(t^{\rho
}-s^{\rho })^{\alpha -1}\left\vert f(s,y(s))\right\vert ds  \notag \\
&\leq &\frac{1}{2}\frac{\rho ^{1-\alpha }}{\Gamma (\alpha )}%
\int_{a}^{b}s^{\rho -1}(b^{\rho }-s^{\rho })^{\alpha -1}\eta (s)\psi \left(
\left\Vert y\right\Vert \right) ds  \notag \\
&&+\frac{\rho ^{1-\alpha }}{\Gamma (\alpha )}\int_{a}^{t}s^{\rho -1}(t^{\rho
}-s^{\rho })^{\alpha -1}\eta (s)\psi \left( \left\Vert y\right\Vert \right)
ds  \notag \\
&\leq &\frac{3}{2}\left\Vert \eta \right\Vert \psi \left( \left\Vert
y\right\Vert \right) \frac{\rho ^{-\alpha }}{\Gamma (\alpha +1)}(b^{\rho
}-a^{\rho })^{\alpha },  \label{E1'}
\end{eqnarray}%
which leads to
\begin{equation*}
\frac{\left\Vert y\right\Vert }{\mathcal{N}\left\Vert \eta \right\Vert \psi
\left( \left\Vert y\right\Vert \right) }<1.
\end{equation*}%
In view of (H$_{2}$), there exists $M$ such that $\left\Vert y\right\Vert
\neq M$. This means that, any solution $y$ of equation (\ref{E3'}) satisfies
$\left\Vert y\right\Vert \neq M,$ let
\begin{equation*}
\mathcal{U=\{}z\in \mathbb{K};\left\Vert y\right\Vert <M\}.
\end{equation*}%
Thus, the Leray--Schauder nonlinear alternative guarantees that (\ref{E3'})
has a fixed point in $\partial \mathcal{U}$, which is a solution of the
boundary value problem (\ref{1})-(\ref{2}), The proof is over.
\end{proof}

We will study the next existence result by using Krasnoselskii$^{^{\prime }}$%
s fixed point theorem. To this end, we change hypothesis (H$_{2}$) into the
following one:

\begin{description}
\item[(H$_{4}$)] There exists a function $q(t)\in \mathcal{C}(J,%
\mathbb{R}
)$ suth that
\begin{equation*}
\left\vert f(t,y)\right\vert \leq q(t),\text{ \ }\forall t\in J,\text{ }y\in
\mathbb{R}
.
\end{equation*}
\end{description}

\begin{theorem}
\label{Th3} Assume that $(H_{4})$ holds. In addition, we assume that:
\end{theorem}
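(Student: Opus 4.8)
The plan is to realize the fixed-point operator $T$ from (\ref{r}) as a sum $T=T_{1}+T_{2}$ on a suitable ball $\mathcal{B}_{r}$ and then invoke Krasnoselskii's fixed point theorem, with $T_{1}$ playing the role of the contraction and $T_{2}$ that of the completely continuous map. The natural splitting separates the constant boundary-correction term from the genuinely $t$-dependent fractional integral, so I would set
\begin{equation*}
T_{1}y(t)=-\frac{1}{2}\frac{\rho ^{1-\alpha }}{\Gamma (\alpha )}\int_{a}^{b}s^{\rho -1}(b^{\rho }-s^{\rho })^{\alpha -1}f(s,y(s))\,ds,
\end{equation*}
\begin{equation*}
T_{2}y(t)=\frac{\rho ^{1-\alpha }}{\Gamma (\alpha )}\int_{a}^{t}s^{\rho -1}(t^{\rho }-s^{\rho })^{\alpha -1}f(s,y(s))\,ds,
\end{equation*}
so that $T=T_{1}+T_{2}$. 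Using (H$_{4}$) I would first fix $r\geq \mathcal{N}\left\Vert q\right\Vert $ and take $\mathcal{B}_{r}$ as in (\ref{ew}).

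The first step is the closedness requirement of the theorem: for any $y,z\in \mathcal{B}_{r}$ I must check $T_{1}y+T_{2}z\in \mathcal{B}_{r}$. Bounding $\left\vert f(s,\cdot )\right\vert \leq q(s)\leq \left\Vert q\right\Vert $ by (H$_{4}$) and evaluating the two kernel integrals with Lemma \ref{L5} gives $\left\Vert T_{1}y+T_{2}z\right\Vert \leq \frac{3}{2}\frac{\rho ^{-\alpha }}{\Gamma (\alpha +1)}(b^{\rho }-a^{\rho })^{\alpha }\left\Vert q\right\Vert =\mathcal{N}\left\Vert q\right\Vert \leq r$. The second step is to show $T_{1}$ is a contraction: since $T_{1}y$ is independent of $t$, the Lipschitz hypothesis yields $\left\Vert T_{1}y-T_{1}z\right\Vert \leq \frac{1}{2}\frac{\rho ^{-\alpha }L}{\Gamma (\alpha +1)}(b^{\rho }-a^{\rho })^{\alpha }\left\Vert y-z\right\Vert =\frac{1}{3}L\mathcal{N}\left\Vert y-z\right\Vert $, and the smallness condition assumed in the theorem forces the coefficient below $1$, so $T_{1}$ contracts.

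The third step, which I expect to be the main effort, is to prove $T_{2}$ is completely continuous. Continuity of $T_{2}$ follows from continuity of $f$ together with dominated convergence, and uniform boundedness on $\mathcal{B}_{r}$ follows from (H$_{4}$) exactly as above, giving $\left\Vert T_{2}y\right\Vert \leq \frac{2}{3}\mathcal{N}\left\Vert q\right\Vert $. The delicate point is equicontinuity: for $t_{1}<t_{2}$ I would split $T_{2}y(t_{2})-T_{2}y(t_{1})$ into the integral over $[a,t_{1}]$ of the kernel difference $(t_{1}^{\rho }-s^{\rho })^{\alpha -1}-(t_{2}^{\rho }-s^{\rho })^{\alpha -1}$ plus the integral over $[t_{1},t_{2}]$, and dominate both by $\left\Vert q\right\Vert $ times the corresponding kernel integrals. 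Because the constant part $T_{1}$ cancels in this difference, the computation is identical to the equicontinuity estimate already carried out in the proof of Theorem \ref{Th2}, producing $\left\vert T_{2}y(t_{1})-T_{2}y(t_{2})\right\vert \leq \frac{2\rho ^{-\alpha }\left\Vert q\right\Vert }{\Gamma (\alpha +1)}(t_{2}^{\rho }-t_{1}^{\rho })^{\alpha }$, which tends to $0$ as $t_{1}\rightarrow t_{2}$ uniformly in $y\in \mathcal{B}_{r}$.

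Finally, by the Arzel\`{a}--Ascoli theorem $T_{2}\mathcal{B}_{r}$ is relatively compact, so $T_{2}$ is completely continuous, and the three hypotheses of Krasnoselskii's fixed point theorem are verified on the bounded closed convex set $\mathcal{B}_{r}$. Hence $T_{1}y+T_{2}y=y$ has a solution in $\mathcal{B}_{r}$, which by the operator equivalence lemma preceding Theorem \ref{Th1} is a solution of (\ref{1})--(\ref{2}). The only genuinely technical piece is the kernel-difference bound in the equicontinuity step; everything else is a reuse of the estimates already displayed in the proofs of Theorems \ref{Th1} and \ref{Th2}.
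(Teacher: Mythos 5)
Your proposal is correct and follows essentially the same route as the paper: the identical splitting $T=T_{1}+T_{2}$ (constant boundary term versus $t$-dependent integral), the same three Krasnoselskii steps with the same estimates, and the same equicontinuity bound $\frac{2\rho ^{-\alpha }\left\Vert q\right\Vert }{\Gamma (\alpha +1)}(t_{2}^{\rho }-t_{1}^{\rho })^{\alpha }$ leading to Arzel\`{a}--Ascoli. The only cosmetic differences are your radius choice $r\geq \mathcal{N}\left\Vert q\right\Vert $ versus the paper's $r_{0}=\mathcal{N}\left\Vert q\right\Vert +1$, and your writing the contraction constant with a Lipschitz constant $L$ where the paper's hypothesis (H$_{5}$) uses $\left\Vert \delta \right\Vert $ in the condition (\ref{w}).
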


\begin{description}
\item[$($H$_{5})$ ] There exists a function $\delta (t)\in \mathcal{C}(J,%
\mathbb{R}
)$ suth that
\begin{equation*}
\left\vert f(t,x)-f(t,y)\right\vert \leq \delta (t)\left\vert x-y\right\vert
,\text{ }\forall t\in J,\text{ }x,y\in
\mathbb{R}
.
\end{equation*}%
If
\begin{equation}
\Lambda :=\frac{1}{2}\frac{\rho ^{-\alpha }\left\Vert \delta \right\Vert }{%
\Gamma (\alpha +1)}(b^{\rho }-a^{\rho })^{\alpha }<1,  \label{w}
\end{equation}%
then the boundary value problem (\ref{1})-(\ref{2}) has at least one
solution on $J$.
\end{description}

\begin{proof}
Consider the operator $T:\mathcal{C}(J,%
\mathbb{R}
)\longrightarrow \mathcal{C}(J,%
\mathbb{R}
)$ defined by (\ref{r}). We define $\mathcal{B}_{r_{0}}:=\{y\in \mathcal{C}%
(J,%
\mathbb{R}
):\left\Vert y\right\Vert \leq r_{0}\}$, $\sup_{t\in J}\left\vert
q(t)\right\vert =\left\Vert q\right\Vert ,$ and select a suitable constant $%
r_{0}$ such that
\begin{equation*}
r_{0}=\mathcal{N}\left\Vert q\right\Vert +1,
\end{equation*}%
where $\mathcal{N}\ $is defined as in Theorem \ref{Th1}. Furthermore, we
need to analyze the operator $T$ into sum two operators $T_{1}+T_{2}$, as
follows
\begin{equation*}
T_{1}y(t)=-\frac{1}{2}\frac{\rho ^{1-\alpha }}{\Gamma (\alpha )}%
\int_{a}^{b}s^{\rho -1}(b^{\rho }-s^{\rho })^{\alpha -1}f(s,y(s))ds,
\end{equation*}%
and%
\begin{equation*}
T_{2}y(t)=\frac{\rho ^{1-\alpha }}{\Gamma (\alpha )}\int_{a}^{t}s^{\rho
-1}(t^{\rho }-s^{\rho })^{\alpha -1}f(s,y(s))ds.
\end{equation*}%
Taking into account that $T_{1}$ and $T_{2}$ are defined on $\mathcal{B}%
_{r_{0}}.$ The proof will be given in several steps.

\textbf{Step 1}: $T_{1}y_{1}+T_{2}y_{2}$ $\in \mathcal{B}_{r_{0}}$ for every
$y_{1},y_{2}\in \mathcal{B}_{r_{0}}$.

For $y_{1},y_{2}\in \mathcal{B}_{r_{0}}$, we have%
\begin{eqnarray*}
\left\vert T_{1}y_{1}(t)+T_{2}y_{2}(t)\right\vert  &\leq &\left\vert
T_{1}y_{1}(t)\right\vert +\left\vert T_{2}y(t)\right\vert  \\
&\leq &\frac{1}{2}\frac{\rho ^{1-\alpha }}{\Gamma (\alpha )}%
\int_{a}^{b}s^{\rho -1}(b^{\rho }-s^{\rho })^{\alpha -1}\left\vert
f(s,y_{1}(s))\right\vert ds \\
&&+\frac{\rho ^{1-\alpha }}{\Gamma (\alpha )}\int_{a}^{t}s^{\rho -1}(t^{\rho
}-s^{\rho })^{\alpha -1}\left\vert f(s,y_{2}(s))\right\vert ds \\
&\leq &\frac{3}{2}\frac{\rho ^{-\alpha }}{\Gamma (\alpha +1)}(b^{\rho
}-a^{\rho })^{\alpha }\left\Vert q\right\Vert ,
\end{eqnarray*}%
which gives
\begin{equation}
\left\Vert T_{1}y_{1}+T_{2}y_{2}\right\Vert \leq \mathcal{N}\left\Vert
q\right\Vert \leq r_{0}.  \label{q1}
\end{equation}

This proves that $T_{1}y_{1}+T_{2}y_{2}$ $\in \mathcal{B}_{r_{0}}$ for every
$y_{1},y_{2}\in \mathcal{B}_{r_{0}}.$

\textbf{Step 2}: $T_{1}$ is a contration mapping on $\mathcal{B}_{r_{0}}$.

In view of hypothesis (H$_{5}$), then for each $y_{1},y_{2}\in \mathcal{B}%
_{r_{0}}$ and $t\in $ $J$; we have%
\begin{eqnarray*}
\left\vert T_{1}y_{1}(t)-T_{1}y_{2}(t)\right\vert &\leq &\frac{1}{2}\frac{%
\rho ^{1-\alpha }}{\Gamma (\alpha )}\int_{a}^{b}s^{\rho -1}(b^{\rho
}-s^{\rho })^{\alpha -1}\left\vert f(s,y_{1}(s))-f(s,y_{2}(s))\right\vert ds
\\
&\leq &\frac{1}{2}\frac{\rho ^{1-\alpha }}{\Gamma (\alpha )}%
\int_{a}^{b}s^{\rho -1}(b^{\rho }-s^{\rho })^{\alpha -1}\left\Vert \delta
\right\Vert \left\Vert y_{1}-y_{2}\right\Vert ds \\
&\leq &\frac{1}{2}\frac{\rho ^{-\alpha }\left\Vert \delta \right\Vert }{%
\Gamma (\alpha +1)}(b^{\rho }-a^{\rho })^{\alpha }\left\Vert
y_{1}-y_{2}\right\Vert ,
\end{eqnarray*}%
which implies%
\begin{equation*}
\left\Vert T_{1}y_{1}-T_{1}y_{2}\right\Vert \leq \Lambda \left\Vert
y_{1}-y_{2}\right\Vert ,
\end{equation*}%
It follows from the inequality (\ref{w}) that $T_{1}$ is contraction mapping.

\textbf{Step 3}: The operator $T_{2}$ is completely continuous on $\mathcal{B%
}_{r_{0}}$.

First, from the continuity of $f$, we conclude that the operator $T_{2}$ is
continuous.

Next, It is easy to verify that%
\begin{equation*}
\left\Vert T_{2}y\right\Vert \leq \frac{\rho ^{-\alpha }\left\Vert
q\right\Vert }{\Gamma (\alpha +1)}(b^{\rho }-a^{\rho })^{\alpha }=\frac{2}{3}%
\mathcal{N}\left\Vert q\right\Vert =\frac{2}{3}r_{0}-1<r_{0}.
\end{equation*}

This proves that $T_{2}$ is uniformly bounded on $\mathcal{B}_{r_{0}}$.

Finally, we show that $T_{2}$ is equicontinuous on $\mathcal{B}_{r_{0}}$.
Let us set $sup_{(t,y)\in J\times \mathcal{B}_{r_{0}}}$ $\left\vert
f(t,y\right\vert =f_{0}<\infty ,$\ and let $y\in \mathcal{B}_{r_{0}}$ and $%
t_{1},t_{2}\in $ $J$ with $t_{1}<t_{2}$. Then we have%
\begin{eqnarray*}
\left\vert T_{2}y(t_{1})-T_{2}y(t_{2})\right\vert &\leq &\frac{\rho
^{1-\alpha }}{\Gamma (\alpha )}\int_{a}^{t_{1}}s^{\rho -1}\left[
(t_{1}^{\rho }-s^{\rho })^{\alpha -1}-(t_{2}^{\rho }-s^{\rho })^{\alpha -1}%
\right] \left\vert f(s,y(s)\right\vert ds \\
&&+\frac{\rho ^{1-\alpha }}{\Gamma (\alpha )}\int_{t_{1}}^{t_{2}}s^{\rho
-1}(t_{2}^{\rho }-s^{\rho })^{\alpha -1}\left\vert f(s,y(s))\right\vert ds \\
&\leq &f_{0}\frac{\rho ^{1-\alpha }}{\Gamma (\alpha )}\int_{a}^{t_{1}}s^{%
\rho -1}\left[ (t_{1}^{\rho }-s^{\rho })^{\alpha -1}-(t_{2}^{\rho }-s^{\rho
})^{\alpha -1}\right] ds \\
&&+f_{0}\frac{\rho ^{1-\alpha }}{\Gamma (\alpha )}\int_{t_{1}}^{t_{2}}s^{%
\rho -1}(t_{2}^{\rho }-s^{\rho })^{\alpha -1}ds \\
&\leq &\frac{\rho ^{-\alpha }f_{0}}{\Gamma (\alpha +1)}(t_{2}^{\rho
}-t_{1}^{\rho })^{\alpha }.
\end{eqnarray*}

As $t_{1}\longrightarrow t_{2}$ the right-hand side of the last inequality
is not dependent on $y$ and goes to zero. Consequently,%
\begin{equation*}
\left\vert T_{2}y(t_{1})-T_{2}y(t_{2})\right\vert \rightarrow 0\text{, \ }%
\forall \text{ }\left\vert t_{2}-t_{1}\right\vert \rightarrow 0,\text{ }y\in
\mathcal{B}_{r_{0}}.
\end{equation*}%
This proves that $T_{2}$ is equicontinuous on $\mathcal{B}_{r_{0}}$. In view
of Arzela-Ascoli Theorem, it follows that $T_{2}$ is relatively compact on $%
\mathcal{B}_{r_{0}}$. Thus, all the assumptions of Krasnosel'skii fixed
point theorem are satisfied. Therefore, we conclude that the boundary value
problem (\ref{1})-(\ref{2}) has at least one solution on $J.$
\end{proof}

\section{Examples}

Will be provided in the revised submission.

\end{document}